\documentclass{article}

\usepackage{amsfonts}
\usepackage{amsmath}
\usepackage{amssymb}
\usepackage{amsthm}
\usepackage{xypic}
\xyoption{curve}
\usepackage{graphicx}

\newcommand{\etal}{{et al.\! }}

\newcommand{\ie}{{i.e., }}

%
\renewcommand{\S}[1]{\ensuremath{\mathfrak{S}_{#1}}} 


\newcommand{\Sav}[2]{\ensuremath{\mathfrak{S}_{#1}(#2)}} 

\newcommand{\sav}[2]{\ensuremath{\bigl| \mathfrak{S}_{#1}(#2) \bigr|}} 

\newcommand{\Sp}[3]{\ensuremath{\mathfrak{S}_{#1}(#2)[#3]}}
\renewcommand{\sp}[3]{\ensuremath{s_{#1}(#2)[#3]}}
\newcommand{\Spt}[4]{\ensuremath{\mathfrak{S}_{#1}(#2)[#3;#4]}}
\newcommand{\spt}[4]{\ensuremath{s_{#1}(#2)[#3;#4]}}

   \newcommand{\oi}{\sim}
   
   \newcommand{\statwe}[1]{\ensuremath{\stackrel{#1}{\equiv}}}  



\newcommand{\inv}{\ensuremath{\textrm{inv}}}
\newcommand{\maj}{\ensuremath{\textrm{maj}}}
\newcommand{\rmaj}{\ensuremath{\textrm{rmaj}}}
\newcommand{\des}{\ensuremath{\mathrm{des}}}
\newcommand{\peak}{\ensuremath{\mathrm{peak}}}
\newcommand{\vall}{\ensuremath{\mathrm{vall}}}
\newcommand{\sgn}{\ensuremath{\mathrm{sgn}}}
\newcommand{\red}{\ensuremath{\mathrm{red}}}
\newcommand{\ltrmin}{\ensuremath{\mathrm{ltrmin}}}
\newcommand{\ltrmax}{\ensuremath{\mathrm{ltrmax}}}
\newcommand{\rtlmin}{\ensuremath{\mathrm{rtlmin}}}
\newcommand{\rtlmax}{\ensuremath{\mathrm{rtlmax}}}
\newcommand{\rtl}{right-to-left }
\newcommand{\ltr}{left-to-right }


\newtheorem{theorem}{Theorem}
\newtheorem{definition}[theorem]{Definition}

\newtheorem{lemma}[theorem]{Lemma}
\newtheorem{corollary}[theorem]{Corollary}

\newtheorem*{remark}{Remark}

\renewcommand{\d}{\mbox{-}}

\begin{document}
\title{Refining enumeration schemes to count according to permutation statistics}
\author{Andrew M. Baxter}


\maketitle

\abstract{
We consider the question of computing the distribution of a permutation statistics over restricted permutations via enumeration schemes.  The restricted permutations are those avoiding sets of vincular patterns (which include both classical and consecutive patterns), and the statistics are described in the number of copies of certain vincular patterns such as the descent statistic and major index.  An enumeration scheme is a polynomial-time algorithm (specifically, a system of recurrence relations) to compute the number of permutations avoiding a given set of vincular patterns.  Enumeration schemes' most notable feature is that they may be discovered and proven via only finite computation.  We prove that when a finite enumeration scheme exists to compute the number of permutations avoiding a given set of vincular patterns, the scheme can also compute the distribution of certain permutation statistics with very little extra computation.
}

\section{Introduction}

Enumeration schemes are special recurrences to compute the number of permutations avoiding a set of vincular patterns.  In this paper, we discuss how to refine enumeration schemes to compute the distributions of certain permutation statistics over a set of pattern-avoiding permutations.  This extends previous work in \cite{Baxter2010, Baxter2012} which considers the distribution of only the inversion number.

Let $[n]$ be shorthand for the set $\{1,\dotsc,n\}$.  For a word $w \in [n]^k$, we write  $w=w_1 w_2 \dotsm w_k$ and define the \emph{reduction} $\red(w)$ to be the word obtained by replacing the $i^{th}$ smallest letter(s) of $w$ with $i$.  For example $\red(839183)=324132$.  If $\red(w)=\red(w')$, we say that $w$ and $w'$ are \emph{order-isomorphic} and write $w\oi w'$.  We will commonly use the notation $|w|$ to denote the length of $w$.
 

   Vincular patterns resemble classical patterns, with the constraint that some of the letters in a copy must be consecutive.  Formally, a \emph{vincular pattern of length $k$} is a pair $(\sigma, X)$ where $\sigma$ is a permutation in $\S{k}$ and $X\subseteq \{0,1,2,\dotsc, k\}$ is a set of ``adjacencies.''   A permutation $\pi\in\S{n}$ \emph{contains the vincular pattern $(\sigma, X)$} if there is a $k$-tuple $1\leq i_1 < i_2 < \dotsb < i_k\leq n$ such that the following three criteria are satisfied:    
   \begin{itemize}
     \item $\red(\pi_{i_1} \pi_{i_2} \dotsm \pi_{i_k}) = \sigma$.
     \item $i_{x+1} = i_x +1$ for each $x\in X\setminus\{0,k\}$.
     \item $i_1=1$ if $0\in X$ and $i_k=n$ if $k\in X$.
   \end{itemize}
In the present work we restrict our attention to patterns $(\sigma, X)$ where $\sigma\in\S{k}$ and $X\subseteq [k-1]$, rendering the third containment criterion irrelevant.\footnote{We enact this restriction partly for simplicity.  It is likely that the prefix-focused arguments in \cite{Baxter2012} and below extend to patterns $(\sigma, X)$ with $0\in X$ with few modifications, but it is unlikely such an approach could work for patterns with $k \in X$.}
The subsequence $\pi_{i_1} \pi_{i_2} \dotsm \pi_{i_k}$ is called a \emph{copy} of $(\sigma, X)$.  In the permutation $\pi= 162534$, the subsequence $1253$ is a copy of $(1243, \{3\})$, but the subsequence $1254$ is not a copy since the 5 and 4 are not adjacent in $\pi$.  The ``classical pattern'' $\sigma$ is precisely the vincular pattern $(\sigma, \emptyset)$ since no adjacencies are required, while the ``consecutive pattern'' $\sigma$ is the vincular pattern $(\sigma, \{1,2,\dotsc, k-1\})$ since all internal adjacencies are required.  
 
   In practice we write $(\sigma, X)$ as a permutation with a dash between $\sigma_j$ and $\sigma_{j+1}$ if $j\not\in X$.  For example, $(1243, \{3\})$ is written $1\d2\d43$.   We occasionally refer to ``the vincular pattern $\sigma$'' or even ``the pattern $\sigma$'' without explicitly referring to $X$.  
   
   If the permutation $\pi$ does not contain a copy of the pattern $(\sigma, X)$, then $\pi$ is said to \emph{avoid} $\sigma$.  We will notation $\Sav{n}{\sigma}$ or $\Sav{n}{(\sigma, X)}$ to denote the set of permutations avoiding the $(\sigma, X)$, and $\Sav{n}{B}$ denotes those permutations avoiding every vincular pattern $(\sigma, X) \in B$.

  Observe that a vincular pattern $(\sigma, X)$ of length $k$ exhibits similar symmetries to those of permutations, except for taking inverses.  The reverse is given by $(\sigma,X)^{r} = (\sigma^{r}, k-X)$ where $k-X = \{k-x: x\in X\}$.  For example, $(1\d3\d42)^{r} = 24\d3\d1$.  The complement is $(\sigma,X)^{c}=(\sigma^{c},X)$.  For example, $(1\d3\d42)^{c} = 4\d2\d13$.  It follows that that $\pi$ avoids $(\sigma, X)$ if and only if $\pi^{r}$ avoids $(\sigma, X)^{r}$.  Similarly, $\pi$ avoids $(\sigma, X)$ if and only if $\pi^{c}$ avoids $(\sigma, X)^{c}$.
    

See Steingr\'{i}mssson's survey for a fuller history in \cite{Steingrimsson2010Survey}.  From their earliest days, vincular patterns been linked to many of the common combinatorial structures such as set partitions and lattice paths in \cite{Claesson2001} as well as permutation statistics in \cite{Babson2000}.
    
  Enumeration schemes were introduced by Zeilberger in \cite{Zeilberger1998} as an automated method to compute $\sav{n}{B}$ for many different $B$.  Vatter improved schemes in \cite{Vatter2008} with the introduction of gap vectors, and Zeilberger provided an alternate implementation in \cite{Zeilberger2006}.  The greatest feature of schemes is that they may be discovered by a computer: the user need only input the set $B$ (along with bounds to the computer search) and the computer will return an enumeration scheme (if one exists within the bounds of the search) which computes $\sav{n}{B}$ in polynomial time. Pudwell extended these methods to consider pattern avoidance in permutations of a multiset in \cite{Pudwell2010a, Pudwell2008}, as well as barred-pattern avoidance in \cite{Pudwell2010b}.  The author and Pudwell extended schemes to sets of vincular patterns in \cite{Baxter2012}.

 A \emph{permutation statistic} is any function $f: \bigcup_{n\geq 0} \S{n} \to \mathbb{Z}$.  The most-studied permutation statistic is the \emph{inversion number} $\inv(\pi) = \bigl| \{(i,j): i<j \text{ and } \pi_i > \pi_j \} \bigr|$.  In terms of vincular patterns, $\inv(\pi)$ is the number of copies of $2\d1 = (21, \emptyset)$.  Similarly, the \emph{descent number} $\des(\pi) = \bigl| \{i: \pi_i>\pi_{i+1} \} \bigr|$ is the number of copies of $21 = (21, \{1\})$.  In this work we will primarily consider permutation statistics which count the number of copies of a given vincular pattern, sometimes called \emph{pattern functions}.  It is shown in \cite{Babson2000} that many well-known permutation statistics can be framed as linear combinations of pattern functions.  For a permutation statistic $f$ and set $S\subseteq \bigcup_{n\geq 0} \S{n}$, the \emph{distribution of $f$ over $S$} is given  by:
 \begin{equation}
   F(S, f, q) := \sum_{\pi\in S} q^{f(\pi)}
 \end{equation}
We also consider the simultaneous distribution of multiple statistics $\mathbf{f} = \langle f_1, \dotsc, f_m \rangle$ over the same set $S$ with the indeterminates $\mathbf{q} = \langle q_1 \dotsc, q_m\rangle$.  The distribution of $\mathbf{f}$ over $S$ is given by:
 \begin{equation}
   F(S,  \langle f_1, \dotsc, f_m \rangle,  \langle q_1, \dotsc, q_m \rangle) := \sum_{\pi\in S} {q_1}^{f_1(\pi)}\,{q_2}^{f_2(\pi)}\dotsm {q_m}^{f_m(\pi)}.
 \end{equation}

Distributions of statistics over sets of pattern-avoiding permutations have received increased attention of late, focusing primarily on sets of permutations avoiding classical patterns of length 3.   Barcucci \etal use generating trees to study the inversion number over $\Sav{n}{B}$ for a few examples of sets $B$ in \cite{Barcucci2001}.  Barnabei \etal study copies of consecutive patterns over $\Sav{n}{1\d2\d3}$ and $\Sav{n}{3\d1\d2}$  in \cite{Barnabei2009, Barnabei2010}.  Dokos \etal refine Wilf-equivalence in \cite{Dokos2012} by studying the inversion number and major index over $\Sav{n}{\tau}$ for classical patterns $\tau\in\S{3}$.  Bona and Homberger study the total number of classical patterns $\sigma\in\S{3}$ over $\Sav{n}{\tau}$ for another classical pattern $\tau\in\S{3}$ in \cite{Bona2010, Bona2012, Homberger2012}.  Most recently, Burstein and Elizalde in \cite{Burstein2013} study the total number of vincular patterns of length $3$ over $\Sav{n}{\tau}$ for classical patterns $\tau\in\S{3}$.

Suppose that $E$ is a finite enumeration scheme which gives a recurrence to compute $\sav{n}{B}$ for a given set of patterns $B$.  The work in \cite{Baxter2010, Baxter2012} demonstrates how to use $E$ to compute $F(\Sav{n}{B}, \inv, q)$. The present work demonstrates how to use $E$ to compute the distribution $F(\Sav{n}{B},\mathbf{f}, \mathbf{q})$ where each statistic $f_i$ counts the number of copies of a vincular pattern of the form $(\sigma, [ |\sigma| -1)$ or $(\sigma, [|\sigma|-2])$ or counts the number of \rtl minima or \rtl maxima.  The results are implemented in the Maple package \texttt{Statter}, available for download from the author's homepage.

The paper is organized as follows.  Section \ref{sec:schemesoverview} outlines the basics of enumeration schemes and their structure.  Section \ref{sec:deletionfriend} defines the notion of an ``enumeration-scheme-compatible,'' or ``ES-compatible,'' statistic.  Subsection \ref{sec:deletionfriendlyexamples} presents three classes of ES-compatible statistics.  Section \ref{sec:Deepening} presents a technical result proving that any given finite enumeration scheme can be expanded to fit the additional requirements which ES-compatible statistics can impose.   Section \ref{sec:Applications} presents three specific examples of how enumeration schemes can be applied to explore statistics over sets $\Sav{n}{B}$.

\section{Overview of Enumeration Schemes}\label{sec:schemesoverview}

  Enumeration schemes are succinct encodings for a family of recurrence relations enumerating a family of sets.  The enumerated sets are actually subsets of $\Sav{n}{B}$ determined by prefixes.  
  
  For pattern $p\in \S{k}$, let $\Sp{n}{B}{p}$ be the set of permutations $\pi\in \Sav{n}{B}$ such that $\red(\pi_1 \pi_2 \dotsc \pi_k)= p$.  We call $p$ the \emph{prefix pattern}.  To refine further, let $w\in[n]^k$ and define $\Spt{n}{B}{p}{w}$ to be those permutations in $\pi\in \Sp{n}{B}{p}$ such that $\pi_1 \pi_2 \dotsm \pi_k=w$.  For example,
\begin{equation*}
 \Spt{5}{1\d2\d3}{21}{53} = \{ 53142, 53214, 53241, 53412, 53421 \}.
\end{equation*}
Since we are interested in enumeration, it will be handy to have the notation $\sp{n}{B}{p}=\bigl|\Sp{n}{B}{p}\bigr|$ and $\spt{n}{B}{p}{w}=\bigl|\Spt{n}{B}{p}{w}\bigr|$.

By looking at the prefix of a permutation, one can identify likely ``trouble spots'' where forbidden patterns may appear.  For example, suppose we wish to avoid the (classical) pattern $1\d2\d3$.  Then the presence of the pattern $12$ in the prefix indicates the potential for the whole permutation to contain a $1\d2\d3$ pattern.  

Enumeration schemes take a divide-and-conquer approach to enumeration.  We define the \emph{child} of a permutation $p\in\S{k}$ to be any permutation $p'\in\S{k+1}$ such that $\red(p'_1 p'_2 \dotsm p'_k)=p$.
Any $\Sp{n}{B}{p}$ for $p\in\S{k}$ may be partitioned into the family of sets $\Sp{n}{B}{p'}$ for each of its children $p' \in \Sp{k+1}{B}{p}$.  The sets indexed by these children are then considered as described below, and their sizes are totaled to obtain $\sp{n}{B}{p}$.  In the end we have computed $\sav{n}{B}$, since $\Sav{n}{B} = \Sp{n}{B}{\epsilon} = \Sp{n}{B}{1}$, where $\epsilon$ is the empty (\ie, length 0) permutation.

For $p\in S_k$ a set $\Sp{n}{B}{p}$ fits into one of three cases:

\begin{enumerate}
   \item[(1)] If $n=k$, then $\Sp{n}{B}{p}$ is either $\{p\}$ or $\emptyset$, depending on whether $p$ avoids $B$.  
   \item[(2)] For each $w\in[n]^k$ such that $\red(w)=p$, one of the following happens:
     \begin{itemize}
      \item[(2a)] $\Spt{n}{B}{p}{w}$ is empty, and so $\spt{n}{B}{p}{w}=0$.
      \item[(2b)] $\Spt{n}{B}{p}{w}$ is in bijection with some other $\Spt{\hat{n}}{B}{\hat{p}}{\hat{w}}$ for $\hat{n}<n$, and so $\spt{n}{B}{p}{w} = \spt{\hat{n}}{B}{\hat{p}}{\hat{w}}$.
     \end{itemize}
   \item[(3)] $\Sp{n}{B}{p}$ must be partitioned further, so $\sp{n}{B}{p} = \sum\limits_{p'\in \Sp{k+1}{B}{p}} \sp{n}{B}{p'}$.
\end{enumerate}

Case (1) provides the base cases for our recurrence.  If case (2) applies, then we will use it preferentially over case (3).
If case (2) does not apply, we must divide $\Sp{n}{B}{p}$ as in case (3).  Determining whether case (2) applies makes use of \emph{gap vector criteria} to test (2a) and \emph{reversible deletions} to form the bijection in (2b).  These concepts are outlined in the following subsections.

\subsection{Gap Vectors}\label{gv:intro}

The differences between the values of letters in the prefix may be great that a forbidden pattern \emph{must} appear.   To make this more precise, we follow our example above and compute $\sp{n}{1\d2\d3}{12}$.  Observe that $\Spt{n}{1\d2\d3}{12}{w_1 w_2}$ is empty if $w_1<w_2<n$, since otherwise if $\pi \in \Spt{n}{1\d2\d3}{12}{w_1 w_2}$ then $\pi_i=n$ for some $i\geq 3$ and so $w_1 w_2 n$ forms a $1\d2\d3$ pattern.  Since the possibility for any $\pi_i>w_2$ for $i\geq 3$ prohibits the formation of a $1\d2\d3$-avoiding permutation, we must restrict the space above $w_2$.

To formalize this, consider $\Spt{n}{B}{p}{w}$ and let $c_i$ be the $i^{th}$ smallest letter in $w$.  Let $c_0=0$ and $c_{k+1}=n+1$, and form the $(k+1)$-vector $\vec{g}(n,w)$ so that the $i^{th}$ component is $g_i = c_i-c_{i-1}-1$.  Note that $g_i$ counts the number of letters for any $\pi\in \Spt{n}{B}{p}{w}$ which lie strictly between $c_{i-1}$ and $c_{i}$, \ie the number of letters $\pi_j$ following the prefix ($j>k$) and $c_{i-1} \leq \pi_j \leq c_{i}$.  We call $\vec{g}(n,w)$ the \emph{spacing vector} for $w$.

In the example above, if $\vec{g}(n,w)\geq \langle 0,0,1\rangle$ in the product order of $\mathbb{N}^{3}$ (\ie component-wise), then $\Spt{n}{1\d2\d3}{12}{w}=\emptyset$.  We call $\langle 0,0,1 \rangle$ a gap vector for the prefix $12$.  More generally we may make the following definition:

\begin{definition}
Given a set of forbidden patterns $B$ and prefix $p$, then $\vec{v}$ is a \emph{gap vector for prefix $p$ with respect to $B$} if, for all $n$, $\Spt{n}{B}{p}{w}=\emptyset$ for any $w$ such that $\vec{g}(n,w)\geq \vec{v}$.  In this case we say that $w$ \emph{satisfies} the \emph{gap vector criterion} for $\vec{v}$.  
\end{definition}

Hence $\vec{v}=\langle 0,0,1 \rangle$ is a gap vector for $p=12$ with respect to $B=\{1\d2\d3\}$, and any prefix set $w = w_1 w_2$ with $w_1<w_2<n$ satisfies the gap vector condition for $v$.

Observe that gap vectors for a given prefix $p\in \S{k}$ form an upper order ideal in $\mathbb{N}^{k+1}$, since if $\vec{v}$ is a gap vector so is any $\vec{u}\geq \vec{v}$.  Hence it suffices to determine only the minimal elements (which form a basis).  For details on the discovery of gap vectors and automating the process, see \cite{Vatter2008, Zeilberger2006, Baxter2012}.

Note that if the prefix $p$ contains a pattern in $B$, then $\Spt{n}{B}{p}{w}=\emptyset$ for any appropriate $w$, and so $\vec{0}=\langle 0,0,\ldots, 0 \rangle$ is a gap vector.  


\subsection{Reversible Deletability}\label{rd:intro}

When $w$ fails the gap vector criterion for all gap vectors $\vec{v}$, we must rely on bijections with previously-computed $\Spt{\hat{n}}{B}{\hat{p}}{\hat{w}}$.  To continue our example above, consider $\Spt{n}{1\d2\d3}{12}{w_1 n}$.  Here $w_1 n$ fails all gap vector criteria, because $\langle 0,0,1\rangle$ forms the basis for the ideal of gap vectors and $\vec{g}(n,w_1 n)=\langle w_1-1, n-w_1-1,0 \rangle \not\geq \langle 0,0,1\rangle$.  However, any $\pi\in \Spt{n}{1\d2\d3}{12}{w_1 n}$  has $\pi_2=n$, so we may use the map $d_2:\ \pi_1 \pi_2 \dotsc \pi_n \mapsto \red(\pi_1 \pi_3 \dotsc \pi_n)$ to form a bijection
$\Spt{n}{1\d2\d3}{12}{w_1 n} \to \Spt{n-1}{1\d2\d3}{1}{w_1}$.  The deletion of a letter always preserves pattern-avoidance properties when considering classical patterns, but inverting the map by inserting a letter has the potential for creating a forbidden pattern.  Here, however, inserting an $n$ at the second index cannot possibly create a $1\d2\d3$, so we may safely reverse the deletion.

More generally define the deletion $d_r(\pi) := \red(\pi_1 \ldots \pi_{r-1} \pi_{r+1} \ldots \pi_n)$, that is, the permutation obtained by omitting the $r^{th}$ letter of $\pi$ and reducing.  Furthermore for a set $R$, define $d_R(\pi)$ to be the permutation obtained by deleting $\pi_r$ for each $r\in R$ and then reducing.  For a word $w$ with no repeated letters, define $d_r(w)$ be the word obtained by deleting the $r^{th}$ letter and then subtracting 1 from each remaining letter larger than $w_r$.  Similarly, to construct $d_R(w)$ delete $w_r$ for each $r\in R$ and subtract $\bigl|\{r\in R: w_r < w_i\}\bigr|$ from each remaining $w_i$.  For example $d_3(6348) =537$ and $d_{\{1,3\}}(6348) = 36$.  It can be seen that this definition is equivalent to the one given above when $w\in\S{k}$, and it allows for more succinct notation in the upcoming definition.  
In the unrestricted case, ${d_R: \Spt{n}{\emptyset}{p}{w} \to \Spt{n-|R|}{\emptyset}{d_R(p)}{d_R(w)}}$ is a bijection for any set $R\subseteq [|p|]$.  Sometimes we are lucky and the restriction to $\Spt{n}{B}{p}{w}$ is a bijection with $\Spt{n-|R|}{B}{d_R(p)}{d_R(w)}$, leading to the following definition:

\begin{definition}
The set of indices $R$ is \emph{reversibly deletable for $p$ with respect to $B$} if the map 
$$d_R: \Spt{n}{B}{p}{w} \to \Spt{n-|R|}{B}{d_R(p)}{d_R(w)} $$
is a bijection for all words $w$ failing the gap vector criterion for every gap vector of $p$ with respect to $B$ (\ie $d_R$ is a bijection for all $w$ such that $\Spt{n}{B}{p}{w}\neq \emptyset$).
\end{definition}

Note that the empty set $R=\emptyset$ is reversibly deletable for any $p$ and $B$, but is uninteresting.  Additionally, if $\vec{0}$ is a gap vector then any set $R\subseteq [ |p| ]$ is vacuously reversibly deletable since $\Spt{n}{B}{p}{w} = \emptyset$ for any prefix $w$.


Proving that a set is reversibly deletable for prefix $p$ with respect to $B$ can be carried out by a finite list of verifications, and thus can be done via computer.  This is proven in \cite{Vatter2008} for the case that $B$ contains only classical patterns, and in \cite{Baxter2012} in the case that $B$ contains vincular patterns.  The process of automated discovery itself is not relevant to the present work and will be omitted.

\subsection{Formal Definition of Enumeration Schemes}

We will now formally define an enumeration scheme.

\begin{definition}\label{def:scheme}
Let $B$ be a set of vincular patterns.  An \emph{enumeration scheme for $B$} is a set $E$ of triples $(p, G, R)$, where $p$ is a permutation (\ie, the prefix pattern), $G$ is a basis of gap vectors for $p$ with respect to $B$, and $R$ is a reversibly deletable set for $p$ with respect to $B$.  Furthermore, $E$ must satisfy the following criteria:

\begin{enumerate}
 \item $(\epsilon, \emptyset, \emptyset)\in E$.
 \item For each $(p, G, R) \in E$,
  \begin{enumerate}
   \item If $R=\emptyset$ and $\vec{0}\notin G$, then there exists a triple $(p', G', R') \in E$ for each child $p'$ of $p$.
   \item If $R\neq\emptyset$, then there exists a triple $(\hat{p}, \hat{G}, \hat{R})\in E$ for $\hat{p}=d_{R}(p)$.
  \end{enumerate}
\end{enumerate}
\end{definition}

To compute $\spt{n}{B}{p}{w}$ for a fixed $n$, $p$, and $w$, the enumeration scheme $E$ is ``read''  by finding the appropriate triple $(p, G, R) \in E$ and concluding:

\begin{enumerate}
  \item If $w$ satisfies the gap vector criteria for some $\vec{v}\in G$, then $\spt{n}{B}{p}{w}=0$.
  \item If $w$ fails the gap criteria for all $\vec{v}\in G$ and $R\neq \emptyset$, then $\spt{n}{B}{p}{w} = \spt{n-|R|}{B}{d_{R}(p)}{d_{R}(w)}$.
  \item If $w$ fails the gap criteria for all $\vec{v}\in G$ and $R = \emptyset$, then $\sp{n}{B}{p} = \sum\limits_{p'\in \Sp{k+1}{B}{p}} \sp{n}{B}{p'}$.
\end{enumerate}

When combined with the initial conditions that $\spt{n}{B}{p}{w}=1$ whenever $p$ has length $n$ and avoids $B$, the scheme provides a system of recurrences to compute $\spt{n}{B}{p}{w}$ and ultimately $\sav{n}{B}$.

To illustrate, consider the enumeration scheme for $\Sav{n}{1\d2\d3}$:
\begin{equation}\label{eqn:123scheme}
\left\{ (\epsilon, \emptyset, \emptyset), (1, \emptyset, \emptyset), (12, \{ \langle 0,0,1 \rangle \}, \{2\}), (21, \emptyset, \{1\}) \right\}
\end{equation}
Since $R_{\epsilon} = \emptyset$, the first condition above requires the presence of $(1, G_1, R_1)$.  Starting with the pattern 1 yields no additional information, so $R_1=\emptyset$ and thus explaining the presence of $(12,G_{12}, R_{12})$ and $(21,G_{21}, R_{21})$.  As discussed above, $\{\langle 0,0,1\rangle\}$ forms a basis for the gap vectors for 12, and whenever $w$ fails this gap vector criteria the second letter is reversibly deletable.  For the fourth entry in the scheme, suppose that $\pi\in \Sp{n}{\emptyset}{21}$ contains a $1\d2\d3$ pattern involving the first letter, say $\pi_1 < \pi_i < \pi_{j}$ for $i<j$.  Then since $\pi_2<\pi_1$, we see that $\pi_2 <\pi_i < \pi_j$ is another $1\d2\d3$ pattern.  Therefore $\pi_1$ cannot be the deciding factor for whether $\pi$ contains $1\d2\d3$.  Hence the index 1 is reversibly deletable, so $R_{21}=\{1\}$.  

Enumeration schemes exhibit a tree-like structure.  The empty prefix $\epsilon$ serves as the root, and the children of each prefix are drawn as children in a rooted tree.  When a prefix has nontrivial gap vector criteria, we list those basis vectors below it.  When prefix $p$ has a non-empty reversibly deletable set $R$, we draw an arrow from $p$ to $d_R(p)$ labeled with ``$d_R$''.  See Figure \ref{fig:123scheme} for an example.

\begin{figure}[htb]
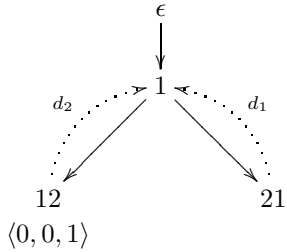

\centering
\xy 
(0,10)*+{\epsilon}="A";
(0,0)*+{1}="B";
(-15,-15)*+{12}="C1";
(-15,-20)*+{\langle 0,0,1 \rangle};
( 15,-15)*+{21}="C3";
{\ar@{->} "A"; "B"};
{\ar@{->} "B"; "C1"};
{\ar@{->} "B"; "C3"};
{\ar @/^1pc/ @{.>} ^{d_2} "C1"; "B"};
{\ar @/_1pc/ @{.>} _{d_1} "C3"; "B"};
\endxy
 \caption{Tree representation of the enumeration scheme for $\Sav{n}{1\d2\d3}$}
 \label{fig:123scheme}
\end{figure}
If $|E|$ is finite, we say that $B$ admits a finite enumeration scheme.  A finite enumeration scheme gives us a polynomial-time algorithm to compute $\spt{n}{B}{p}{w}$.  We construct the system of recurrences based on the partitions and bijections above, along with base cases as given by the gap vector criteria and the trivial cases when $\Sp{n}{B}{p}=\{p\}$ or $\emptyset$.  For example, the above enumeration scheme in \eqref{eqn:123scheme} translates into the following system of recurrences:

 \begin{equation}\label{eqn:123rec}
  \begin{split}
   \sav{n}{1\d2\d3}  &= \sp{n}{1\d2\d3}{\epsilon} \\
               &= \sp{n}{1\d2\d3}{1} \text{when $n>0$} \\
               &= \sum_{i=1}^n \spt{n}{1\d2\d3}{1}{i}  \\
   \spt{n}{1\d2\d3}{1}{i} &= \sum_{j=1}^{i-1} \spt{n}{1\d2\d3}{21}{ij} + \sum_{j=i+1}^{n} \spt{n}{1\d2\d3}{12}{ij}  \\
   \spt{n}{1\d2\d3}{12}{ij} &=
        \begin{cases}
                       0            & \text{if $n-j\geq 1$} \\
                 \spt{n}{1\d2\d3}{1}{i} & \text{otherwise} \\
                 \end{cases} \\
   \spt{n}{1\d2\d3}{21}{ij} &= \spt{n}{1\d2\d3}{1}{j}
  \end{split}
 \end{equation}

The recurrences in \eqref{eqn:123rec} simplify to create the following recurrence:

\begin{equation}\label{eqn:123recSimpl}
  \spt{n}{1\d2\d3}{1}{i} = \sum_{j=1}^{i} \spt{n-1}{1\d2\d3}{1}{j}
\end{equation}

Amont other things, one can then evaluate this recurrence by hand to identify the closed form $\sav{n}{1\d2\d3} = \frac{1}{n+1} \binom{2n}{n}$, the Catalan numbers.

The length of the longest prefix $p$ appearing in finite scheme $E$ is the called the \emph{depth} of $E$.  Not every set $B$ admits a finite enumeration scheme, the simplest example being the classical pattern $2\d3\d1$.  Let $E$ be the scheme for $\Sav{n}{2\d3\d1}$, and let $J_t = t(t-1)\dotsm 21$ be the decreasing permutation of length $t$.  It can be shown that for any $t$ there are no gap vectors for $J_t$ and no non-empty reversibly deletable sets.  Hence $E$ must contain the triple $(J_t, \emptyset, \emptyset)$ for each $t\geq 1$ and hence $E$ is infinite.  

It should be noted that the enumeration scheme for $\Sav{n}{1\d3\d2}$ is finite (of depth 2) and $\sav{n}{2\d3\d1}=\sav{n}{1\d3\d2}$ by symmetry.    More generally, it can be seen that if $B$ admits an enumeration scheme $E_B$ of depth $K$ then its set of complements $B^c = \{\sigma^c: \sigma \in B\}$ also admits an enumeration scheme $E_{B^c}$ of depth $K$.   The analogous statements regarding $B^r = \{\sigma^r: \sigma\in B\}$ do not hold and so $B$ may not have a finite scheme while $B^r$ does, as exhibited by $B=\{2\d3\d1\}$.

\section{Enumeration-scheme-compatible statistics}\label{sec:deletionfriend}

\subsection{Definitions and interaction with enumeration schemes}
The author proves in \cite{Baxter2010} that if $B$ admits a finite enumeration scheme, then the distribution of the statistic $\inv(\pi)$ over $\Sav{n}{B}$ can be computed via the same enumeration scheme.  This is the consequence of comparing the inversion number of a permutation and its image under the deletion map $d_R:\Spt{n}{B}{p}{w} \to \Spt{n-|R|}{B}{d_R(p)}{d_R(w)}$.   In particular, for $\pi\in\S{n}$ the change in the inversion number after deleting the $r^{th}$ letter is given by
\begin{equation}
  \delta^{\inv}_r(\pi) := \inv(\pi) - \inv(d_r(\pi)) = (\pi_r - 1) + \sum_{i<r} \sgn(\pi_i - \pi_r),
\end{equation}
where $\sgn(x)$ is the signum function:
\begin{equation*}
\sgn(x):=
\begin{cases}
  -1, & x<0 \\
   0, & x=0 \\
   1, & x>0.
\end{cases}
\end{equation*}
  For the more general case, let $R = \{r_1, \dotsc, r_t\}$ where $r_j < r_{j+1}$.  Then the deletion $d_R$ has the following effect on inversion number:
\begin{equation}
 \delta^{\inv}_{R}(\pi) := \inv(\pi) - \inv(d_R(\pi)) = \sum_{r \in R} \delta_r (\pi) - \inv(\pi_{r_1} \dotsm \pi_{r_t})
\end{equation}

Observe that $\delta_R(\pi)$ can be written purely in terms of the letters $\pi_1, \dotsc,  \pi_{r_t}$.  Therefore if $E$ is an enumeration scheme for pattern set $B$, and $(p, G, R)\in E$, then $\delta_R$ is constant over the set $\Spt{n}{B}{p}{w}$ for any fixed $w$.  Thus we define a new function $\Delta^{\inv}_R(w,n)$ on prefix words $w$, which takes on the value $\delta^{\inv}_R(\pi)$ given by a $\pi \in \Spt{n}{B}{p}{w}$.   Therefore we may recursively compute the distribution of $\inv$ over $\Spt{n}{B}{p}{w}$ via 
\begin{equation}
 F(\Spt{n}{B}{p}{w}, \inv,q) = q^{\Delta^{\inv}_R(w,n)} F(\Spt{n-|R|}{B}{d_R(p)}{d_R(w)}, \inv, q)
\end{equation}

The above results motivate the following definitions:

\begin{definition}\label{def:deltaR}
Let $f: \bigcup_{n\geq 0} \S{n} \to \mathbb{Z}$ be a permutation statistic.  For a permutation $\pi$  let the \emph{$R$-deletion difference}, denoted $\delta^{f}_{R}(\pi)$, be $  f(\pi) - f(d_R(\pi)) $.  

For nonnegative integer $m$, a permutation statistic $f$ is \emph{enumeration-scheme-compatible} (or ``ES-compatible'') \emph{with margin $m$} if for any positive integer $t$ and any $R\subseteq [t]$, the $R$-deletion differences $\delta^{f}_{R} (\pi) = \delta^{f}_{R} (\pi')$ whenever $\pi$ and $\pi'$ are two permutations of length $n\geq t+m$ such that $\pi_1 \dotsm \pi_{t+m} = \pi'_1 \dotsm \pi'_{t+m}$.  We denote this constant value $\Delta^{f}_{R}(w,n)$ where $w=\pi_1 \dotsm \pi_{t+m}$.
\end{definition}

In other words, permutation statistic $f$ is ES-compatible with margin $m$ if $\delta^{f}_{R}(\pi)$ may be determined from only the length of $\pi$ and its first $\max R+m$ letters.  Note that if $f$ is ES-compatible with margin $m$, then $f$ is also ES-compatible with margin $m'$ for any $m' \geq m$.  

The results from \cite{Baxter2010} cited above may be rephrased as follows:
\begin{theorem}\label{thm:InvIsDF}
 The inversion number is ES-compatible with margin $0$.
\end{theorem}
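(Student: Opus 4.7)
The plan is to verify the two explicit formulas for $\delta^{\inv}_r(\pi)$ and $\delta^{\inv}_R(\pi)$ displayed just before Definition~\ref{def:deltaR}, and then to observe by inspection that each formula's right-hand side depends only on the first $\max R$ letters of $\pi$ (and not on its length).

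First I would derive $\delta^{\inv}_r(\pi)$ by direct counting. The inversions of $\pi$ split into those involving position $r$ and those that do not; since reduction preserves relative order, the latter are in bijection with the inversions of $d_r(\pi)$, so $\delta^{\inv}_r(\pi)$ equals the number of inversions involving $\pi_r$. Splitting by whether the other position is to the left or right of $r$ gives
\begin{equation*}
\delta^{\inv}_r(\pi) = \#\{i<r : \pi_i > \pi_r\} + \#\{j>r : \pi_j < \pi_r\}.
\end{equation*}
To eliminate the dependence on positions $j>r$, note that exactly $\pi_r-1$ positions of $\pi$ hold values smaller than $\pi_r$, so $\#\{j>r: \pi_j<\pi_r\}=(\pi_r-1)-\#\{i<r:\pi_i<\pi_r\}$. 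Substituting recovers $\delta^{\inv}_r(\pi) = (\pi_r-1) + \sum_{i<r}\sgn(\pi_i-\pi_r)$, whose right-hand side depends only on $\pi_1, \dotsc, \pi_r$ and makes no reference to $n$.

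For $R = \{r_1 < \dotsb < r_t\} \subseteq [t]$, an inversion of $\pi$ is destroyed by $d_R$ precisely when at least one of its two positions lies in $R$. I would apply inclusion--exclusion to $\bigcup_{r\in R} A_r$, where $A_r$ is the set of inversions of $\pi$ having $r$ as one of their two positions: the singletons contribute $\sum_{r\in R}\delta^{\inv}_r(\pi)$, the pairwise intersection $A_r\cap A_{r'}$ for $r<r'$ is either the single inversion $(r,r')$ (if $\pi_r>\pi_{r'}$) or empty, and all higher intersections vanish because an inversion has only two positions. This yields $\delta^{\inv}_R(\pi)=\sum_{r\in R}\delta^{\inv}_r(\pi)-\inv(\pi_{r_1}\dotsm \pi_{r_t})$. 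Each term on the right is a function of $\pi_1, \dotsc, \pi_{r_t}$ alone, and since $r_t=\max R\leq t$, the value $\delta^{\inv}_R(\pi)$ is determined by $\pi_1\dotsm \pi_t$ regardless of the length of $\pi$. Hence any two permutations of length $\geq t$ sharing their first $t$ letters yield the same $R$-deletion difference, which is exactly ES-compatibility with margin $m=0$. The only step requiring any care is the inclusion--exclusion, but the immediate vanishing of triple and higher intersections makes that step routine.
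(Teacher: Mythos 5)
Your proof is correct and follows essentially the same route as the paper: the paper displays exactly the formulas $\delta^{\inv}_r(\pi) = (\pi_r - 1) + \sum_{i<r}\sgn(\pi_i-\pi_r)$ and $\delta^{\inv}_R(\pi) = \sum_{r\in R}\delta^{\inv}_r(\pi) - \inv(\pi_{r_1}\dotsm\pi_{r_t})$ (deferring their verification to the cited earlier work) and then, as you do, concludes by observing that the right-hand side depends only on $\pi_1,\dotsc,\pi_{\max R}$. Your direct-counting and inclusion--exclusion derivations of those formulas are sound and simply fill in the details the paper outsources; the only blemish is the harmless notational collision of using $t$ both for $|R|$ and for the bound in $R\subseteq[t]$.
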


It follows from the definition of ES-compatible that enumeration schemes are amenable to computing the distribution for any ES-compatible statistic:

\begin{theorem}\label{thm:HowToRecurse}
  Let $f$ be a ES-compatible permutation statistic with margin $m$.   If $R$ is reversibly deletable for prefix $p\in \S{k}$ with respect to $B$ and $\max R + m \leq k$, then
\begin{equation} \label{eqn:HowToRecurse}
 F(\Spt{n}{B}{p}{w}, f, q) = q^{\Delta^{f}_{R}(w,n)} \, F(\Spt{n-|R|}{B}{d_R(p)}{d_R(w)}, f, q),
\end{equation}
where $\Delta^{f}_{R}(w,n)$ has the value $\delta^f_R(\pi)$ for any $\pi \in \Spt{n}{B}{p}{w}$.
\end{theorem}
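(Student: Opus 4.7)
My plan is to split $f(\pi)$ as $\Delta^{f}_{R}(w,n) + f(d_R(\pi))$ using ES-compatibility, factor the (now constant) first term out of the generating function, and then reindex along the reversible-deletion bijection. In symbols,
\begin{align*}
F(\Spt{n}{B}{p}{w}, f, q) &= \sum_{\pi \in \Spt{n}{B}{p}{w}} q^{f(\pi)} \\
&= \sum_{\pi \in \Spt{n}{B}{p}{w}} q^{\Delta^{f}_{R}(w,n) + f(d_R(\pi))} \\
&= q^{\Delta^{f}_{R}(w,n)} \sum_{\sigma \in \Spt{n-|R|}{B}{d_R(p)}{d_R(w)}} q^{f(\sigma)},
\end{align*}
which is exactly \eqref{eqn:HowToRecurse}.

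Each of the two rewrites above rests on one hypothesis of the theorem. For the first equality after expansion, I must verify that $\delta^{f}_{R}(\pi) = \Delta^{f}_{R}(w,n)$ is the same value for every $\pi \in \Spt{n}{B}{p}{w}$. Here the assumption $\max R + m \leq k$ is essential: applying Definition \ref{def:deltaR} with $t = \max R$ tells us that $\delta^{f}_{R}(\pi)$ is determined by the first $t+m = \max R + m$ letters of $\pi$, and these letters are included in the common prefix $\pi_1 \dotsm \pi_k = w$ precisely because $\max R + m \leq k$. The length condition $n \geq t + m$ built into Definition \ref{def:deltaR} is also satisfied, since $\Spt{n}{B}{p}{w}$ can only be nonempty when $n \geq k \geq \max R + m$. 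For the second equality, reversible deletability of $R$ for $p$ with respect to $B$ asserts exactly that $d_R : \Spt{n}{B}{p}{w} \to \Spt{n-|R|}{B}{d_R(p)}{d_R(w)}$ is a bijection whenever $w$ fails all gap vector criteria of $p$; the change of variables $\sigma = d_R(\pi)$ then converts the sum over $\pi$ into the claimed sum over $\sigma$. If instead $w$ satisfies some gap vector criterion, both sides of \eqref{eqn:HowToRecurse} vanish and the identity is trivial.

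I do not foresee a serious obstacle: the theorem is essentially a repackaging of the two defining conditions, with ES-compatibility supplying the constancy of the $R$-deletion difference on $\Spt{n}{B}{p}{w}$ and reversible deletability supplying the bijection. The only point requiring care is the bookkeeping that links the margin inequality $\max R + m \leq k$ to the hypothesis of Definition \ref{def:deltaR}, which is handled above.
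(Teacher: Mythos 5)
Your proof is correct and follows essentially the same route as the paper's: use ES-compatibility with $\max R + m \leq k$ to see that $\delta^f_R$ is constant on $\Spt{n}{B}{p}{w}$, write $f(\pi) = \Delta^f_R(w,n) + f(d_R(\pi))$, factor the constant out of the weight enumerator, and reindex the sum along the bijection $d_R$. The extra bookkeeping you supply (taking $t=\max R$ in Definition~\ref{def:deltaR} and noting $n \geq k \geq \max R + m$) is a slightly more explicit justification of the first step than the paper gives, but it is the same argument.
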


\begin{proof}
Since $\max R + m \leq |p|$ and $f$ is ES-compatible with margin $m$, we see from the definition of ES-compatible that $\delta^f_R (\pi)$ is constant for all permutations $\pi \in \Spt{n}{B}{p}{w}$.  Therefore $\Delta^{f}_{R}(w,n)$ is well-defined.  Furthermore, $f(\pi) = \Delta^f_R(w,n) + f(d_R(\pi))$ for any $\pi \in \Spt{n}{B}{p}{w}$.

From the definition of reversibly-deletable, $d_R: \Spt{n}{B}{p}{w} \to \Spt{n-|R|}{B}{d_R(p)}{d_R(w)}$ is a bijection.  Shifting focus to the weight enumerators, we see that

\begin{equation}
 \begin{split}
    F(\Spt{n}{B}{p}{w}, f, q) :&= \sum_{\pi \in \Spt{n}{B}{p}{w}}  q^{f(\pi)} \\
                                              &= \sum_{\pi \in \Spt{n}{B}{p}{w}}  q^{\Delta^f_R(w,n)} q^{f(d_R(\pi))} \\
                                              &= q^{\Delta^f_R(w,n)} \sum_{\pi \in d_R(\Spt{n}{B}{p}{w})}   q^{f(\pi)} \\
                                              &= q^{\Delta^f_R(w,n)}  \sum_{\pi \in  \Spt{n-|R|}{B}{d_R(p)}{d_R(w)}}  q^{f(\pi)} \\
                                             &= q^{\Delta^f_R(w,n)} F(\Spt{n-|R|}{B}{d_R(p)}{d_R(w)}, f, q)
 \end{split}
\end{equation}
Thus we have proven equation \eqref{eqn:HowToRecurse}.
\end{proof}

By a similar proof we get the following multivariate generalization of Theorem \ref{thm:HowToRecurse}:

\begin{theorem}\label{thm:HowToRecurseMulti}
  Let $f_1, \dotsc, f_s$ be ES-compatible permutation statistics, each with margin at most $m$, and let $\mathbf{f}=\langle f_1, \dotsc, f_s \rangle$ and $\mathbf{q}=\langle q_1, \dotsc, q_s \rangle$.   If $R$ is reversibly deletable for prefix $p\in \S{k}$ with respect to $B$ and $\max R + m \leq k$, then
\begin{equation} \label{eqn:HowToRecurseMulti}
 F(\Spt{n}{B}{p}{w}, \mathbf{f}, \mathbf{q}) = \Bigl( q_1^{\Delta^{f_1}_{R}(w,n)} \dotsm q_s^{\Delta^{f_s}_{R}(w,n)} \Bigr) \, F(\Spt{n-|R|}{B}{d_R(p)}{d_R(w)}, \mathbf{f}, \mathbf{q}),
\end{equation}
where each $\Delta^{f_i}_{R}(w,n)$ has the value $\delta^{f_i}_R(\pi)$ for any $\pi \in \Spt{n}{B}{p}{w}$.
\end{theorem}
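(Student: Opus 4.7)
The plan is to follow the template of the proof of Theorem \ref{thm:HowToRecurse} essentially unchanged, exploiting the fact that in a product of the form $q_1^{f_1(\pi)} \dotsm q_s^{f_s(\pi)}$ the exponents in distinct indeterminates do not interact. First, because each $f_i$ is ES-compatible with margin at most $m$ and the hypothesis $\max R + m \leq k = |p|$ ensures that the prefix $\pi_1 \dotsm \pi_{\max R + m}$ is already determined by $w$, Definition \ref{def:deltaR} tells us that $\delta^{f_i}_R(\pi)$ depends only on $w$ and $n$ for every $\pi \in \Spt{n}{B}{p}{w}$. Hence each $\Delta^{f_i}_R(w,n)$ is well-defined, and $f_i(\pi) = \Delta^{f_i}_R(w,n) + f_i(d_R(\pi))$ for every such $\pi$ and every $i$.

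Next I would factor the joint weight of an arbitrary $\pi \in \Spt{n}{B}{p}{w}$ as
\begin{equation*}
q_1^{f_1(\pi)} \dotsm q_s^{f_s(\pi)} = \Bigl( q_1^{\Delta^{f_1}_R(w,n)} \dotsm q_s^{\Delta^{f_s}_R(w,n)} \Bigr) \, q_1^{f_1(d_R(\pi))} \dotsm q_s^{f_s(d_R(\pi))}.
\end{equation*}
The bracketed prefactor depends only on $w$ and $n$, so summing over $\pi \in \Spt{n}{B}{p}{w}$ and pulling this prefactor out yields
\begin{equation*}
F(\Spt{n}{B}{p}{w}, \mathbf{f}, \mathbf{q}) = \Bigl( q_1^{\Delta^{f_1}_R(w,n)} \dotsm q_s^{\Delta^{f_s}_R(w,n)} \Bigr) \sum_{\pi \in \Spt{n}{B}{p}{w}} q_1^{f_1(d_R(\pi))} \dotsm q_s^{f_s(d_R(\pi))}.
\end{equation*}
Reversible deletability then supplies the bijection $d_R: \Spt{n}{B}{p}{w} \to \Spt{n-|R|}{B}{d_R(p)}{d_R(w)}$, and reindexing the remaining sum along this bijection identifies it with $F(\Spt{n-|R|}{B}{d_R(p)}{d_R(w)}, \mathbf{f}, \mathbf{q})$, yielding equation \eqref{eqn:HowToRecurseMulti}.

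The main obstacle is essentially absent: the proof of Theorem \ref{thm:HowToRecurse} already isolates the two ingredients used here (constancy of $\delta^{f}_R$ over prefix classes, plus the bijection $d_R$), and neither is disturbed by adding further statistics provided they all admit a common margin bound. The only piece of bookkeeping worth noting is that the single hypothesis $\max R + m \leq k$ must cover all $s$ statistics simultaneously; this is precisely why one takes $m$ to be an upper bound on the individual margins, invoking the observation after Definition \ref{def:deltaR} that ES-compatibility with margin $m$ implies ES-compatibility with every larger margin.
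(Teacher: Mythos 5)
Your proposal is correct and matches the paper's intent exactly: the paper itself gives no separate argument for this theorem, stating only that it follows ``by a similar proof'' to Theorem \ref{thm:HowToRecurse}, and your write-up is precisely that proof carried out — establishing constancy of each $\delta^{f_i}_R$ on the prefix class via the common margin bound, factoring out the monomial prefactor, and reindexing along the bijection $d_R$. No gaps.
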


To more clearly tie Theorems \ref{thm:HowToRecurse} and \ref{thm:HowToRecurseMulti} to enumeration schemes, we introduce the following terminology:

\begin{definition}
 For nonnegative integer $c$, an enumeration scheme $E$ has \emph{clearance $c$} if for each $(p, G, R)\in E$, either $R = \emptyset$, $\vec{0} \in G$, or $|p| - \max R \geq c$.
\end{definition}

For example, the scheme for $1\d2\d3$-avoiding permutations given in \eqref{eqn:123scheme} has clearance $0$ because of the triple $(12, \{ \langle 0,0,1\rangle \}, \{2\})$.   The clearance of an enumeration scheme describes the largest margin that the scheme could accomodate, as detailed in the following corollaries.

\begin{corollary}\label{cor:SchemeComputing}
 If $f$ is a ES-compatible permutation statistic of margin $m$ and $E$ is an enumeration scheme for pattern set $B$ with clearance at least $m$, then $F(\Sav{n}{B}, f, q)$ may be computed in polynomial time (via enumeration scheme $E$).
\end{corollary}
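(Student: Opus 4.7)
The plan is to mirror the recurrence structure encoded by the enumeration scheme $E$, but with each integer $\spt{n}{B}{p}{w}$ replaced throughout by the polynomial $F(\Spt{n}{B}{p}{w}, f, q)$. For every triple $(p, G, R)\in E$ I would perform the same three-way case analysis used in Subsection \ref{sec:schemesoverview}: if $w$ satisfies the gap vector criterion for some $\vec{v}\in G$, set $F(\Spt{n}{B}{p}{w}, f, q) = 0$; if $w$ fails all gap criteria and $R\neq\emptyset$, apply Theorem \ref{thm:HowToRecurse} to reduce to the entry indexed by $d_R(p)$; and if $w$ fails all gap criteria and $R=\emptyset$, partition $\Sp{n}{B}{p}$ by children to obtain $F(\Sp{n}{B}{p}, f, q) = \sum_{p' \in \Sp{k+1}{B}{p}} F(\Sp{n}{B}{p'}, f, q)$, where $k=|p|$. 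Base cases occur when $n=|p|$, with $\Spt{n}{B}{p}{w}$ either equal to $\{w\}$ (contributing $q^{f(w)}$) or empty. The desired quantity is read off at the root of $E$ as $F(\Sav{n}{B}, f, q) = F(\Sp{n}{B}{\epsilon}, f, q)$.

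The only non-trivial verification is that Theorem \ref{thm:HowToRecurse} may legitimately be invoked at each deletion node of $E$, and this is precisely what the clearance hypothesis provides. By definition of clearance, any triple $(p, G, R)\in E$ with $R\neq\emptyset$ and $\vec{0}\notin G$ satisfies $|p|-\max R\geq m$, which rearranges to $\max R + m\leq |p|$. This is exactly the inequality required by Theorem \ref{thm:HowToRecurse}, so $\Delta^{f}_R(w,n)$ is well-defined and the weighted reduction \eqref{eqn:HowToRecurse} applies uniformly across $E$.

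For the polynomial-time claim, the refined recursion has the same call graph as the integer-valued recursion already produced by $E$, which is polynomial in $n$ by hypothesis. Each cell now holds a univariate polynomial in $q$; for any ES-compatible statistic $f$ whose values and deletion differences $\Delta^{f}_R(w,n)$ are polynomial-time computable and polynomially bounded in $n$---as is the case for each ES-compatible statistic exhibited in Subsection \ref{sec:deletionfriendlyexamples}---the polynomial arithmetic at each step is also polynomial. The main ``obstacle'' here is really a matter of bookkeeping rather than mathematics: the notion of clearance was formulated precisely to bridge the margin required by $f$ with the deletion structure supplied by $E$, so once this alignment is noted the corollary is a direct assembly of Definition \ref{def:scheme}, Theorem \ref{thm:HowToRecurse}, and the clearance condition.
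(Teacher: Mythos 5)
Your proposal is correct and matches the paper's (implicit) argument: the corollary is stated without a separate proof precisely because it follows by reading the scheme as usual with counts replaced by weight enumerators, invoking Theorem \ref{thm:HowToRecurse} at each deletion step, where the clearance condition $|p|-\max R\geq m$ supplies exactly the hypothesis $\max R+m\leq|p|$. Your additional bookkeeping on base cases and the polynomial-time claim is consistent with what the paper intends.
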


\begin{corollary}\label{cor:SchemeComputingMulti}
   Let $f_1, \dotsc, f_s$ be ES-compatible permutation statistics, each with margin at most $m$, and let $\mathbf{f}=\langle f_1, \dotsc, f_s \rangle$ and $\mathbf{q}=\langle q_1, \dotsc, q_s \rangle$.  Let $E$ be a finite enumeration scheme for pattern set $B$  with clearance at least $m$.  Then $F(\Sav{n}{B}, \mathbf{f}, \mathbf{q})$ may be computed in polynomial time (via enumeration scheme $E$).
\end{corollary}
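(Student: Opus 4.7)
The plan is to mirror the structure of the single-statistic Corollary \ref{cor:SchemeComputing}, substituting Theorem \ref{thm:HowToRecurseMulti} for Theorem \ref{thm:HowToRecurse} at the crucial step. Concretely, I would traverse the enumeration scheme $E$ exactly as one does when computing the integer values $\spt{n}{B}{p}{w}$, but carry along the multivariate polynomial $F(\Spt{n}{B}{p}{w}, \mathbf{f}, \mathbf{q})$ at each node in place of that integer count. Initializing the traversal at the root triple $(\epsilon, \emptyset, \emptyset)$ and specializing to the empty prefix word then recovers $F(\Sav{n}{B}, \mathbf{f}, \mathbf{q}) = F(\Sp{n}{B}{\epsilon}, \mathbf{f}, \mathbf{q})$.

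For each triple $(p, G, R) \in E$ and prefix word $w$, the scheme dictates one of three moves. If $w$ satisfies some gap vector criterion in $G$, then $\Spt{n}{B}{p}{w} = \emptyset$ and we return $0$. If $R = \emptyset$ and $\vec{0} \notin G$, we partition into prefix children and sum
\[
 F(\Sp{n}{B}{p}, \mathbf{f}, \mathbf{q}) = \sum_{p' \in \Sp{k+1}{B}{p}} F(\Sp{n}{B}{p'}, \mathbf{f}, \mathbf{q}).
\]
Otherwise $R \neq \emptyset$, and the clearance hypothesis $|p| - \max R \geq m$ is precisely the condition needed to invoke Theorem \ref{thm:HowToRecurseMulti}, yielding
\[
 F(\Spt{n}{B}{p}{w}, \mathbf{f}, \mathbf{q}) = \Bigl( \prod_{i=1}^s q_i^{\Delta^{f_i}_{R}(w,n)} \Bigr) F(\Spt{n-|R|}{B}{d_R(p)}{d_R(w)}, \mathbf{f}, \mathbf{q}).
\]
The base cases arrive when $|p| = n$: we return $\prod_i q_i^{f_i(p)}$ if $p$ avoids $B$, and $0$ otherwise. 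This establishes a complete system of recurrences, and correctness follows by induction on $n - |p|$ (together with the well-definedness of each $\Delta^{f_i}_R$, which is already guaranteed by the margin-$m$ hypothesis).

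The main step requiring care is the polynomial-time claim. Since $|E|$ is finite and its depth $K$ is bounded, the prefix pattern $p$ ranges over a constant-size set and the prefix word $w \in [n]^{|p|}$ takes at most $n^{K}$ values; memoizing over triples $(n', p, w)$ therefore yields at most $O(|E| \cdot n^{K+1})$ distinct subproblems. Each subproblem requires only constantly many polynomial operations in the $q_i$, where the exponents $\Delta^{f_i}_R(w,n)$ can be computed in constant time from $w$ and $n$, and the resulting polynomials have degrees bounded by $\binom{n}{|\sigma_i|}$ for the relevant pattern lengths. Termination is the only other thing to verify: each recursive step either strictly decreases $n'$ by $|R| \geq 1$ or strictly increases $|p|$ (which is itself capped at $K$), so the recursion unfolds to depth $O(n)$. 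I anticipate no real obstacles here; the work is essentially bookkeeping, with Theorem \ref{thm:HowToRecurseMulti} doing the substantive mathematical lifting.
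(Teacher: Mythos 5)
Your proposal is correct and matches the paper's (implicit) argument: the paper states this corollary without proof as an immediate consequence of Theorem \ref{thm:HowToRecurseMulti} together with the clearance condition, and your write-up simply makes explicit the scheme traversal, the three cases, the base cases, and the memoization count that the paper takes for granted. No gaps; the only unstated assumption you share with the paper is that each $\Delta^{f_i}_R(w,n)$ is efficiently computable and polynomially bounded, which holds for all the statistics considered.
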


Clearly corollaries \ref{cor:SchemeComputing} and \ref{cor:SchemeComputingMulti} are impractical if there is no scheme $E$ satisfying the conditions stated.  It will be shown in Theorem \ref{thm:Deepening} of Section \ref{sec:Deepening} that a finite scheme of any clearance is sufficient for a polynomial time computation since a scheme can be ``deepened'' to create a scheme for the same pattern set with any desired clearance.  


\subsection{Examples of ES-compatible statistics}\label{sec:deletionfriendlyexamples}

We now take some time to prove some well-known permutaiton statistics are indeed ES-compatible.

\subsubsection{Copies of consecutive patterns}

We first consider statistics based on the number of copies of a given consecutive pattern.  Several well-studied statistics can be phrased in terms of the number of copies of certain consecutive patterns.  The descent number, $\des(\pi)$, is the number of copies of the consecutive pattern $21$.  The number of double-descents, \ie indices $i$ so that  $\pi_i > \pi_{i+1} > \pi_{i+2}$, is the number of copies of the consecutive pattern $321$.  In Subsection \ref{sec:peaks} we discuss the distribution for the number of peaks, \ie indices $i$ so that $\pi_{i-1} < \pi_{i}$ and $\pi_{i} > \pi_{i+1}$, which is the total of the number of copies of $132$ and the number of copies of $231$.

\begin{theorem}\label{thm:ConsAreDF}
 Let $\sigma\in\S{t}$ and $f(\pi)$ be the number of copies of the (consecutive) pattern $(\sigma, [t-1])$ in $\pi$.  Then $f$ is a ES-compatible statistic with margin $t-1$.
\end{theorem}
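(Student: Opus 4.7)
The plan is to compute $\delta^{f}_R(\pi) = f(\pi) - f(d_R(\pi))$ directly by tracking which copies of $(\sigma, [t-1])$ are destroyed by the deletion and which are newly created, and then to show that both quantities depend only on the first $\max R + t - 1$ letters of $\pi$. Since the hypothesis $R \subseteq [\ell]$ and agreement on the first $\ell + (t-1)$ letters guarantees agreement on the first $\max R + (t-1)$ letters, this gives ES-compatibility with margin $t-1$.

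First, I would note that any copy of the consecutive pattern $(\sigma, [t-1])$ in a permutation $\tau$ occupies $t$ consecutive indices $\{i, i+1, \dotsc, i+t-1\}$ and is determined by those indices together with $\tau_i, \dotsc, \tau_{i+t-1}$. Let $\phi \colon [n-|R|] \to [n]\setminus R$ denote the order-preserving bijection, so that $d_R(\pi)_j$ is the reduction of $\pi_{\phi(j)}$. I would then partition copies into three classes: copies in $\pi$ whose window $\{i,\dotsc,i+t-1\}$ is disjoint from $R$ (these correspond bijectively to copies in $d_R(\pi)$ whose window $\phi^{-1}(i),\dotsc,\phi^{-1}(i+t-1)$ is also consecutive in $[n-|R|]$, since consecutive indices in $[n]\setminus R$ map to consecutive indices in $[n-|R|]$); \emph{lost} copies in $\pi$ whose window meets $R$; and \emph{gained} copies in $d_R(\pi)$ at a window $\{j,\dotsc,j+t-1\}$ for which $\phi(j),\dotsc,\phi(j+t-1)$ fail to be consecutive in $[n]$. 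This yields $\delta^f_R(\pi) = (\text{lost}) - (\text{gained})$.

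The main obstacle, and the part requiring the most care, is bounding the positions involved in gained copies. For lost copies the bound is immediate: if the window $\{i,\dotsc,i+t-1\}$ meets $R$ then $i \leq \max R$, so all involved positions are at most $\max R + t - 1$. For gained copies, I would exploit the structure of $\phi$: writing $r = \max R$, once $\phi(j) > r$ we have $\phi(j+k) = \phi(j)+k$ for all $k \geq 0$ (nothing is deleted past $r$), so the window $\{\phi(j),\dotsc,\phi(j+t-1)\}$ is consecutive in $[n]$ and cannot be gained. Hence every gained copy satisfies $\phi(j) \leq r$, and combined with the crude bound $\phi(j+t-1) \leq (j+t-1) + |R|$ together with $j \leq r - |R|$ (forced by $\phi(j) \leq r$), one obtains $\phi(j+t-1) \leq r + t - 1$. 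Thus every letter appearing in either a lost or a gained copy sits in one of the first $\max R + t - 1$ positions of $\pi$.

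To finish, I would observe that whether a given window of positions in $\pi$ yields a copy of $\sigma$ depends only on the values at those positions, and whether such a window is lost (resp.\ corresponds to a gained window in $d_R(\pi)$) depends only on its position relative to $R$. Consequently both counts are determined entirely by $R$ and by $\pi_1, \dotsc, \pi_{\max R + t - 1}$. Given two permutations $\pi, \pi'$ of length $n \geq \ell + (t-1)$ agreeing on their first $\ell + (t-1)$ letters with $R \subseteq [\ell]$, they agree on the first $\max R + (t-1)$ letters, so $\delta^f_R(\pi) = \delta^f_R(\pi')$, establishing the claim.
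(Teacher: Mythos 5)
Your proof is correct, but it is organized differently from the paper's. The paper's argument writes $f(\pi)=\sum_i f_i(\pi)$ with $f_i(\pi)=\chi[\pi_i\dotsm\pi_{i+t-1}\oi\sigma]$, splits the sum at $i=k-t+1$ to obtain $f(\pi)=f(\pi_1\dotsm\pi_k)+f(\pi_{k-t+2}\dotsm\pi_n)$, and then notes that for $R\subseteq[k-t+1]$ the tails $\pi_{k-t+2}\dotsm\pi_n$ and $\pi'_{k-t+2-|R|}\dotsm\pi'_{n-|R|}$ are order-isomorphic; subtracting cancels the tails and immediately yields the closed form $\Delta^f_R(w,n)=f(w)-f(d_R(w))$. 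You instead classify individual copies as preserved, lost, or gained relative to $R$, which makes you do explicitly the one piece of work the paper's cancellation hides: showing that a gained window $\{j,\dotsc,j+t-1\}$ of $d_R(\pi)$ must have $\phi(j)\le\max R$, hence $j\le\max R-|R|$ and $\phi(j+t-1)\le\max R+t-1$. Those inequalities are all valid, and your identity $\delta^f_R=(\text{lost})-(\text{gained})$ agrees with the paper's $f(w)-f(d_R(w))$ once the preserved copies lying inside the prefix cancel. The trade-off: the sum-splitting route is shorter and hands you the formula for $\Delta^f_R$ for free, while your bookkeeping is more transparent about exactly which copies are created or destroyed, at the cost of the analysis of the order-preserving map $\phi$.
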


Note that $f(\pi)$ is defined for any word $\pi$, not just permutations.  Furthermore, if $\pi \oi \pi'$, then $f(\pi) = f(\pi')$.

\begin{proof}
 Fix pattern $\sigma\in\S{t}$ and let $k\geq 1$.  We will prove that $f(\pi) -  f(\pi_1\dotsm \pi_k) = f(d_R(\pi)) -   f(d_R(\pi_1\dotsm \pi_k))$ for any $R\subseteq [k-t+1]$.  From this it is clear that $\delta^f_R (\pi) = f(\pi_1 \dotsm \pi_k) - f(d_R(\pi_1 \dotsm \pi_k))$ for any $k\geq \max{R} + t-1$, and so $f$ is ES-compatible with margin $t-1$.

 Let $f_i(\pi)$ be the number of copies of $(\sigma, [t-1])$ starting at $\pi_i$.  Since $(\sigma, [t-1])$ is consecutive, we see that
\begin{equation*}
 f_i (\pi) = \chi[\pi_i \dotsm \pi_{i+t-1} \oi \sigma],
\end{equation*}
where $\chi[P]$ is the characteristic function for statement $P$, \ie, $\chi[P]$ equals $1$ if $P$ is true and $0$ otherwise.
Therefore $f(\pi) = \sum\limits_{i=1}^{n-t+1} f_i(\pi)$ for $\pi\in\S{n}$, and splitting this sum implies that:
\begin{equation*}
 \begin{split}
  f(\pi) &= \sum_{i=1}^{n-t+1} f_i(\pi) \\
           &= \sum_{i=1}^{k-t+1} f_i(\pi) + \sum_{i=k-t+2}^{n-t+1} f_i(\pi) \\
           &=  \sum_{i=1}^{k-t+1} f_i(\pi_1 \dotsm \pi_{k}) + \sum_{i=k-t+2}^{n-t+1} f_i(\pi_{k-t+2} \dotsm \pi_n) \\
           &= f(\pi_1 \dotsm \pi_k) + f(\pi_{k-t+2} \dotsm \pi_n)
 \end{split}
\end{equation*}
  Thus $f(\pi) =f(\pi_1 \dotsm \pi_k) + f(\pi_{k-t+2} \dotsm \pi_n) $.  Similarly, let $\pi' = d_R(\pi)$ for $R\subseteq [k-t+1]$, and by the appropriate sum-splitting we see that $f(\pi') =f(\pi'_1 \dotsm \pi'_{k-|R|}) + f(\pi'_{k-t+2-|R|} \dotsm \pi'_{n-|R|}) $.  By the definition of $d_R$, $ \pi_{k-t+2} \dotsm \pi_n \oi \pi'_{k-t+2-|R|} \dotsm \pi'_{n-|R|}$.  Hence it follows that
\begin{equation*}
 \begin{split}
   f(\pi) -  f(\pi_1\dotsm \pi_k) &= f(\pi_{k-t+2} \dotsm \pi_n) \\
                                                &= f(\pi'_{k-t+2-|R|} \dotsm \pi'_{n-|R|})\\
                                                &= f(\pi') - f(\pi'_1 \dotsm \pi'_{k-|R|})
 \end{split}
\end{equation*}

Thus we have confirmed $f(\pi) -  f(\pi_1\dotsm \pi_k) = f(d_R(\pi)) -   f(d_R(\pi_1\dotsm \pi_k))$ and our result follows.
\end{proof}

\begin{remark}
In terms relevant to enumeration schemes, for any permutation $\pi\in \Spt{n}{B}{p}{w}$ for a prefix of length $k$, 
\begin{equation}
 \Delta^f_R(w,n) = f(w) - f(d_R(w))= f(p) - f(d_R(p)),
\end{equation}
where $f$ counts the number of copies of a consecutive pattern.
\end{remark}

\subsubsection{Copies of vincular patterns}

We next consider the number of copies of a vincular pattern of the form  $\sigma_1 \dotsm \sigma_{t-1} \d \sigma_t$.  The proof will proceed similarly to that of Theorem \ref{thm:ConsAreDF}.  Note that the inversion number $\inv(\pi)$ is the number of copies of $2\d1$, and so is a special case of this result.  Subsection \ref{sec:majind} extends the results in this section to apply the major index statistic.

\begin{theorem}\label{thm:tail}
  Let $\sigma\in\S{t}$ and let $g(\pi)$ be the number of copies of the pattern $(\sigma, [t-2])$ in $\pi$.  Then $g$ is a ES-compatible statistic with margin $t-2$.
\end{theorem}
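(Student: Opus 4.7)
The plan is to extend the sum-splitting argument of Theorem \ref{thm:ConsAreDF} to accommodate the single free letter $\sigma_t$ at the tail of the pattern. Fix a positive integer $k$ and $R\subseteq [k]$, and for $\pi\in\S{n}$ let $g_i(\pi)$ count copies of $(\sigma,[t-2])$ whose consecutive block $\pi_i\pi_{i+1}\dotsm\pi_{i+t-2}$ begins at position $i$, so that $g(\pi)=\sum_{i=1}^{n-t+1} g_i(\pi)$. I would split this sum as $g(\pi)=g_{\mathrm{front}}(\pi,k)+g_{\mathrm{back}}(\pi,k)$, where $g_{\mathrm{front}}$ collects terms with $i\leq k$ and $g_{\mathrm{back}}$ collects those with $i>k$.

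The back portion is straightforward: any copy counted by $g_{\mathrm{back}}(\pi,k)$ uses only positions strictly greater than $k$, so $R\subseteq[k]$ never touches it. Conversely, since $R\subseteq[k]$, all positions $>k$ in $\pi$ remain mutually adjacent in $d_R(\pi)$ (merely shifted down by $|R|$), so no ``bridging'' across a deletion can create new copies whose block begins at index $>k-|R|$ in $d_R(\pi)$. This yields a bijection $g_{\mathrm{back}}(\pi,k)=g_{\mathrm{back}}(d_R(\pi),k-|R|)$ and reduces $\delta^{g}_{R}(\pi)$ to $g_{\mathrm{front}}(\pi,k)-g_{\mathrm{front}}(d_R(\pi),k-|R|)$.

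For the front portion, every counted copy has its block inside positions $[1,k+t-2]$, hence determined by $W=\pi_1\dotsm\pi_{k+t-2}$. For each starting index $i\leq k$ at which $\red(\pi_i\dotsm\pi_{i+t-2})=\red(\sigma_1\dotsm\sigma_{t-1})$, the number of valid dashing letters $\pi_j$ with $j>i+t-2$ equals the count of positions past the block whose value lies in the unique open interval of integers determined by the rank of $\sigma_t$ among $\sigma_1,\dotsc,\sigma_{t-1}$. This is where the main obstacle arises: the dashing letter can lie arbitrarily far beyond the prefix $W$. The resolution is that the multiset of values appearing in positions $>i+t-2$ is exactly $[n]\setminus\{\pi_1,\dotsc,\pi_{i+t-2}\}$, so the count in any interval equals the interval's width in $[n]$ minus the number of prefix letters lying inside it---a quantity determined entirely by $W$ and $n$. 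Thus $g_{\mathrm{front}}(\pi,k)$ depends only on $W$ and $n$, and the analogous argument applied to $d_R(\pi)$ shows $g_{\mathrm{front}}(d_R(\pi),k-|R|)$ depends only on the first $k+t-2-|R|$ letters of $d_R(\pi)$ (themselves determined by $W$ and $R$) and on $n-|R|$. Combining, $\delta^{g}_{R}(\pi)$ is determined by $W$ and $n$, yielding ES-compatibility with margin $t-2$.
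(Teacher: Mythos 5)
Your proof is correct and takes essentially the same route as the paper's: split the copies of $(\sigma,[t-2])$ according to the starting index of the consecutive block, cancel the tail contribution against the order-isomorphic tail of $d_R(\pi)$, and evaluate the front contribution by taking the width of the value-interval admissible for the dashed letter and subtracting the prefix letters already occupying it (the paper's $h_i - h'_i$ decomposition). The only differences are cosmetic reindexings of where the split occurs.
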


\begin{proof}
   Fix $\sigma\in\S{t}$ and let $k\geq 1$.  We will prove that $\delta^g_R(\pi) := g(\pi) - g(d_R(\pi))$ is determined by the length of $\pi$ together with the prefix $\pi_1 \dotsm \pi_k$ for any $R\subseteq [k-t+2]$.  From this it follows that $g$ is ES-compatible with margin $t-2$.  

For a permutation $\pi\in\S{n}$, define $g_i(\pi)$ to be the number of copies of $(\sigma, [t-2])$ starting at $\pi_i$. Clearly $g(\pi) = \sum_{i=1}^{n} g_i(\pi)$, and if $\pi$ and $\pi'$ are order-isomorphic words then $g_i(\pi) = g_i(\pi')$.   In particular we see that $g_i(\pi)$ can be computed in the following way based, on which $\pi_j$ can be the last letter of a copy of $(\sigma, [t-2])$ starting at $\pi_i$:
\begin{equation}
 \begin{split}
 g_i(\pi) :=& \bigl| \{ j: j>i+t-2, \text{ and } \pi_i \dotsm \pi_{i+t-2}\;\pi_j \oi \sigma  \} \bigr|\\
              =& \bigl| \{ j: j\geq 1 \text{ and }  \pi_i \dotsm \pi_{i+t-2}\;\pi_j \oi \sigma  \}\bigr| - \bigl| \{j: j<i \text{ and } \pi_i \dotsm \pi_{i+t-2}\;\pi_j \oi \sigma \} \bigr|
 \end{split}
\label{eqn:gi}
\end{equation}

We now will show that $g_i(\pi)$ can be determined entirely from $|\pi|$ and $\pi_1 \dotsm \pi_{i+t-2}$ by showing each of the addends in \eqref{eqn:gi} requires such limited information.

We consider the first term, $ \bigl| \{ j: j\geq 1 \text{ and }  \pi_i \dotsm \pi_{i+t-2}\;\pi_j \oi \sigma  \}\bigr|$.  Given the fixed $\sigma\in\S{t}$, we can define $h_i(w, n)$ for words $w\in[n]^k$ as follows:
\begin{equation}
 h_i(w, n) := 
 \begin{cases}
   \min(w_i, \dotsc, w_{i+t-2}) -1 & \text{if } \sigma_t = 1 \\
    n- \max(w_i, \dotsc, w_{i+t-2}) & \text{if } \sigma_t = t \\
  w_{b} -   w_{a} -1 & \text{if } 1 < \sigma_t < t, \sigma_a = \sigma_{t} -1, \sigma_b = \sigma_{t}+1.
 \end{cases}
\end{equation}
  If $\sigma_t = 1$, then $h_i(\pi,n)$ yields the number of letters $\pi_j$ which are less than each of $\pi_i, \dotsc , \pi_{i+t-2}$, in which case  $\pi_i \dotsm \pi_{i+t-2}\;\pi_j \oi \sigma$.  Similarly $\sigma_t = t$, then $h_i(\pi,n)$ yields the number of letters $\pi_j$ which are greater than each of $\pi_i, \dotsc , \pi_{i+t-2}$.  Last, if $a$ and $b$ are defined by $\sigma_{a} = \sigma_{t}-1$ and $\sigma_{b} = \sigma_{t}+1$, then $h_i(\pi,n)$ yields the number of letters $\pi_j$ so that $\pi_{i+a-1} < \pi_j < \pi_{i+b-1}$ since every number between the values $\pi_{i+a-1}$ and $\pi_{i+b-1}$ appears somewhere in $\pi$.  In each case, we see that $$h_i(\pi, n) = \bigl| \{ j: j \geq 1 \text{ and } \pi_i \dotsm \pi_{i+t-2}\;\pi_j \oi \sigma  \}\bigr|.$$  Observe that $h_i(\pi, n) = h_i(\pi_1 \dotsm \pi_{s}, n)$ for any $s \geq i+t-2$, and so the first term of the sum in \eqref{eqn:gi} is determined solely by $|\pi|$ and $\pi_i \dotsm \pi_{i+t-2}$.

Similarly, the term $\bigl| \{j: j<i \text{ and } \pi_i \dotsm \pi_{i+t-2}\;\pi_j \oi \sigma \} \bigr|$ can be determined entirely by $\pi_1 \dotsm \pi_{i+t-2}$.   Define $h'_i(w)$ for word $w$ to be
\begin{equation*}
 h'_i(w) := \sum_{j=1}^{i-1} \chi[w_i \dotsm w_{i+t-2}\;w_j \oi \sigma],
\end{equation*}
and so it is clear that $h'_i(\pi) = \bigl| \{j: j<i \text{ and } \pi_i \dotsm \pi_{i+t-2}\;\pi_j \oi \sigma \} \bigr|$.  Note that $h'_i(\pi) = h'_i(\pi_1 \dotsm \pi_{s})$ for any $s \geq i+t-2$.

Combining the above  observations, we can decompose the sum   $g(\pi) = \sum_{i=1}^{n} g_i(\pi)$ as follows:
\begin{equation}\label{eqn:gisum}
 \begin{split}
  g(\pi) &= \sum_{i=1}^{n} g_i(\pi) \\
          &=  \sum_{i=1}^{k-t+2} g_i(\pi) + \sum_{i=k-t+3}^{n} g_i(\pi) \\
          &=  \sum_{i=1}^{k-t+2} \bigl( h_i(\pi_1 \dotsm \pi_k) - h'_i(\pi_1 \dotsm \pi_k) \bigr) + \sum_{i=k-t+3}^{n} g_i(\pi) \\
 \end{split}
\end{equation}
Fix $R\subseteq [k-t+2]$ and let $\pi' = d_R(\pi)$.  The above observations also apply to $\pi'$ to imply:
\begin{equation}\label{eqn:g'isum}
  g(\pi') = \sum_{i=1}^{k-t-|R|+2} \bigl( h_i(\pi'_1 \dotsm \pi'_{k-|R|}, n-|R|) - h'_i(\pi'_1 \dotsm \pi'_{k-|R|}) \bigr) + \sum_{i=k-t-|R|+3}^{n-|R|} g_i(\pi') 
\end{equation}

We now focus on the last terms of the equations \eqref{eqn:gisum} and \eqref{eqn:g'isum}.  By the definition of $d_R$,  $\pi_{k-t+3} \dotsm \pi_n \oi \pi'_{k-t+3-|R|} \dotsm \pi'_{n-|R|}$.  Therefore $g_i(\pi) = g_{i-|R|}(\pi')$ for $i\geq k-t+3$, and so $\sum\limits_{i=k-t+3}^{n} g_i(\pi) = \sum\limits_{i=k-t-|R|+3}^{n-|R|} g_i(\pi') $.  Thus subtracting equation \eqref{eqn:g'isum} from \eqref{eqn:gisum} we see that:
\begin{equation}
 \begin{split}
 \delta^g_R(\pi) &=  g(\pi) - g(\pi') \\
                           &=  \sum_{i=1}^{k-t+2} \bigl( h_i(\pi_1 \dotsm \pi_k, n) - h'_i(\pi_1 \dotsm \pi_k) \bigr) -  \sum_{i=1}^{k-t-|R|+2} \bigl( h_i(\pi'_1 \dotsm \pi'_{k-|R|}, n-|R|) - h'_i(\pi'_1 \dotsm \pi'_{k-|R|}) \bigr) \\
 \end{split}
\end{equation}
Thus we have proven that $g$ is ES-compatible with margin $t-2$.
\end{proof}

\begin{remark}
In terms relevant to enumeration schemes, for any permutation $\pi\in \Spt{n}{B}{p}{w}$ for a prefix of length $k$, 
\begin{equation}
 \Delta^g_R(w,n) =\sum_{i=1}^{k-t+2} \bigl( h_i(w,n) - h'_i(w) \bigr)  - \sum_{i=1}^{k-t-|R|+2} \bigl( h_i(d_R(w), n-|R|) - h'_i(d_R(w)) \bigr) .
\end{equation}
\end{remark}

As an example of $h_i$ and $h'_i$ in practice, consider $\sigma=4123$.  Here $g(\pi)$ counts the number of copies of the vincular pattern $412\d3$.  Then $a=1$ and $b=3$. Let $\pi$ be any one of the $4!$ permutations of length $9$ such that $w = \pi_1 \dotsm \pi_5 = 86913$.  Since $\pi_3\pi_4\pi_5 = 913 \oi 413 = \sigma_1 \sigma_2 \sigma_3$, we see that $h_3(\pi, 9) = \pi_3 - \pi_5 -1 = 5$.  Also $h'_3(\pi) = 2$, since both 6 and 8 appear before $\pi_3 \pi_4 \pi_5$ and have values which lie between $\pi_5=3$ and $\pi_3=9$.  Therefore $g_3(\pi) = h_3(\pi) - h'_3(\pi) = 3$, and indeed there are 3 copies of $412\d3$ starting at $\pi_3$ (specifically, these copies are 9134, 9135, and 9137, since $\{4, 5, 7\} \subseteq \{\pi_6, \dotsc, \pi_9\}$).  In this example $g_1(\pi) = g_2(\pi) = 0$.  If $R=\{2,3\}$, which respects the needed margin of $2$ for a length-5 prefix, we get that $\pi' = d_R(\pi)$ has prefix $d_R(86913) = 713$.  Deleting the $6$ and $9$ from $\pi$ deletes the three copies of $412\d3$ described above, but creates three new copies since $g_1(\pi') = h_1(713,7) - h'_1(713) = 3 - 0 = 3$ (specifically, the new copies are witnessed by 7134, 7135, and 7136).  Hence in this case $\delta_R^g(\pi) = 0$.

Comparing Theorems \ref{thm:ConsAreDF} and \ref{thm:tail}, one might guess the trend continues, \ie that patterns of the form $(\sigma, [t-3])$ for $\sigma\in\S{t}$ are ES-compatible with margin $t-3$.  An extension involving partially-ordered generalized patterns, as introduced by Kitaev in \cite{Kitaev2005}, provides the proper generalization.  For example, a copy of the pattern $124\d3'\d3''$ would be witnessed by a copy of either $125\d3\d4$ or $125\d4\d3$.  Such a statistic of the form $$\sigma_1~\dotsc~\sigma_{t-1}~\d~\sigma_{t}'~\d~\sigma_{t}''~\d~\dotsc~\d~\sigma_{t}'''$$ can be seen to be ES-compatible with margin $t-2$ (one less than the length of the consecutive portion).  

\subsubsection{Right-to-left statistics}

A letter $w_i$ of word $w$ is a right-to-left maximum [resp., minimum] if $w_i > w_j$ [resp., $w_i < w_j$] for all $j>i$.  Let $\rtlmax(w)$ be the number of \rtl maxima in $w$ and $\rtlmin(w)$ be the number of \rtl minima in word $w$.    For example, if $\pi=28674153$, we see $\rtlmax(\pi) = 4$ (for $\pi_2$, $\pi_4$, $\pi_7$ and $\pi_8$) and $\rtlmin(\pi) = 2$ (for $\pi_6$ and $\pi_8$).  

In this subsection we prove the following theorem:
\begin{theorem}
The statistics $\rtlmin$ and $\rtlmax$ are ES-compatible with margin $0$.
\end{theorem}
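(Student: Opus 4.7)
My plan is to verify Definition \ref{def:deltaR} directly with $m=0$: fix $t\geq 1$, $R\subseteq [t]$, and two permutations $\pi,\pi'$ of length $n\geq t$ sharing a common prefix $w = \pi_1\dotsm\pi_t = \pi'_1\dotsm\pi'_t$, and show $\delta^{\rtlmin}_R(\pi) = \delta^{\rtlmin}_R(\pi')$. The strategy is to split $\rtlmin(\pi)$ into a ``prefix contribution'' that depends only on $w$ and $n$, together with a ``suffix contribution'' that is preserved by the deletion $d_R$. Performing the same decomposition on $d_R(\pi)$ and subtracting, the suffix parts cancel and the difference depends only on $w$ and $n$, as required. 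The argument for $\rtlmax$ is obtained by swapping ``minimum'' and ``maximum'' throughout.

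First I would decompose $\rtlmin(\pi) = A(\pi) + B(\pi)$, where $A(\pi)$ counts \rtl minima at indices $i \leq t$ and $B(\pi)$ counts those at indices $i > t$. For any $i\leq t$, the condition ``$\pi_i$ is a \rtl minimum'' splits into $\pi_i < \min(\pi_{i+1},\dotsc,\pi_t)$ and $\pi_i < \min(\pi_{t+1},\dotsc,\pi_n)$. The first is visibly a function of $w$ alone, while the second is determined by $w$ and $n$ since $\{\pi_{t+1},\dotsc,\pi_n\} = [n]\setminus\{\pi_1,\dotsc,\pi_t\}$. Hence $A(\pi)$ is a function of the pair $(w,n)$. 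The same reasoning applied to $d_R(\pi)$ shows that its prefix contribution, counting \rtl minima at indices $\leq t - |R|$, is a function of $(d_R(w), n-|R|)$, hence also of $(w,n)$.

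For the suffix contribution I would argue $B(\pi) = B(d_R(\pi))$ as follows. Since $R \subseteq [t]$, the letters $\pi_{t+1},\dotsc,\pi_n$ all survive the deletion, occupying positions $t+1-|R|,\dotsc,n-|R|$ in $d_R(\pi)$, and $d_R$ preserves their relative order; hence each comparison $\pi_i < \pi_j$ for $i,j > t$ holds if and only if the corresponding comparison in $d_R(\pi)$ holds. Consequently the \rtl-minimum status of each letter at position $> t$ is preserved by $d_R$, giving $B(\pi) = B(d_R(\pi))$. Subtracting the prefix/suffix decomposition of $d_R(\pi)$ from that of $\pi$ yields an expression for $\delta^{\rtlmin}_R(\pi)$ depending only on $w$ and $n$, proving margin $0$. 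The only potential pitfall is bookkeeping around the boundary $i=t$ and the degenerate case $n=t$ (where the suffix part is empty, and $\min(\pi_{t+1},\dotsc,\pi_n)$ must be interpreted as $+\infty$), neither of which causes real difficulty.
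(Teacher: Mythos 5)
Your proposal is correct and follows essentially the same route as the paper: decompose $\rtlmin$ into the count over prefix positions $i\leq t$ and suffix positions $i>t$, observe that the suffix count is invariant under $d_R$ because the suffix letters survive with their relative order intact, and reduce the prefix count to a function of $(w,n)$ via the observation that a prefix letter's right-to-left-minimum status is determined by $w$ and the complementary set $[n]\setminus\{w_1,\dotsc,w_t\}$ (the paper packages this last point as Lemma \ref{lem:rtlchar}). No gaps.
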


It will be useful to have the following characterization of the \rtl minima and maxima for a permutation, which are based solely on prefixes.  The proof follows directly from the definition above and is omitted.
\begin{lemma}\label{lem:rtlchar}
  Let $\pi=\pi_1\dotsm\pi_n$ be a permutation.  Then,
\begin{enumerate}
\item $\pi_i$ is a \rtl minimum of $\pi$ if and only if $\{1,2,\dotsc, \pi_{i}-1\} \subseteq \{\pi_1, \pi_2, \dotsc, \pi_{i-1}\}$ (\textit{\ie}, all numbers less than $\pi_i$ lie to the left of $\pi_i$).

\item $\pi_i$ is a \rtl maximum of $\pi$ if and only if $\{\pi_{i}+1, \pi_{i}+2, \dotsc, n\} \subseteq \{\pi_1, \pi_2, \dotsc, \pi_{i-1}\}$ (\textit{\ie}, all numbers greater than $\pi_i$ lie to the left of $\pi_i$).

\end{enumerate}
\end{lemma}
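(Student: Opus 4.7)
The plan is to prove each of the two biconditionals by directly unwinding the definitions, making essential use of the fact that $\pi$ is a permutation of $[n]$ and hence every value in $\{1, 2, \ldots, n\}$ appears exactly once among $\pi_1, \ldots, \pi_n$. The two parts are symmetric (related by complementation), so I would write out part (1) in full and note that part (2) follows by the analogous argument (or by applying part (1) to $\pi^c$).

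For part (1), I would prove the forward direction by contrapositive-flavored reasoning: assume $\pi_i$ is a right-to-left minimum, so $\pi_i < \pi_j$ for all $j > i$. Fix any value $v$ with $1 \leq v \leq \pi_i - 1$. Since $\pi$ is a permutation, $v = \pi_\ell$ for some unique index $\ell$. We have $v \neq \pi_i$ (because $v < \pi_i$), and if $\ell > i$ then $\pi_\ell = v < \pi_i$ would contradict the right-to-left minimum property. Hence $\ell < i$, so $v \in \{\pi_1, \ldots, \pi_{i-1}\}$, yielding the desired inclusion.

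For the reverse direction, assume $\{1, 2, \ldots, \pi_i - 1\} \subseteq \{\pi_1, \ldots, \pi_{i-1}\}$ and take any $j > i$. Then $\pi_j$ is distinct from $\pi_i$ (so $\pi_j \neq \pi_i$) and distinct from every $\pi_\ell$ with $\ell < i$, so in particular $\pi_j \notin \{1, \ldots, \pi_i - 1\}$. Combined with $\pi_j \neq \pi_i$, this forces $\pi_j > \pi_i$, confirming that $\pi_i$ is a right-to-left minimum.

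Since this argument consists of elementary set-membership manipulations and uses no structural ingredient beyond the bijection between $[n]$ and $\{\pi_1, \ldots, \pi_n\}$, there is no real obstacle; the only thing to take care with is bookkeeping the cases $v < \pi_i$ versus $v = \pi_i$ versus $v > \pi_i$ so that the inclusion is stated for exactly the right range of values. This is presumably why the authors omit the proof as following ``directly from the definition.''
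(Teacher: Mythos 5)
Your proof is correct, and it is exactly the routine definition-unwinding the paper has in mind: the paper omits the proof of Lemma \ref{lem:rtlchar}, stating that it ``follows directly from the definition,'' and your argument (using only that $\pi$ is a bijection from positions to values, plus the complementation symmetry for part (2)) is that omitted argument, with the edge cases $i=n$ and $\pi_i=1$ handled automatically by the vacuous inclusions.
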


For the remainder of the section, we will restrict ourselves to the proof that $\rtlmax$ is ES-compatible.  The proof that $\rtlmin$ is ES-compatible is analogous.

For integers $a$ and $b$ let $\rtlmax_{[a,b]}(\pi)$ be the number of \rtl maxima $\pi_i$ of $\pi$ such that $i$ is in the closed interval $[a,b]$.  For example, $\rtlmax_{[1,5]}(2867153)= 2$ (for $\pi_2$ and $\pi_5$).  Note that while $\pi_5$ is a \rtl maximum of $\pi_1 \dotsm \pi_5=28671$, it is not counted since this function only counts those letters which are \rtl maxima in the overall permutation.  We may decompose $\rtlmax(\pi)$ for any $\pi\in\S{n}$ and $1\leq t\leq n$, by $\rtlmax(\pi) = \rtlmax_{[1,t]}(\pi) + \rtlmax_{[t+1,n]}(\pi)$.  Thus it follows from the first half of Definition \ref{def:deltaR} that if $\pi\in\S{n}$, $R\subseteq [t]$ and $\pi':=d_R(\pi)\in\S{n-|R|}$,
\begin{equation}
\begin{split}
  \delta_{R}^{\rtlmax}(\pi) &= \rtlmax(\pi)  - \rtlmax(\pi') \\
                                            &= \bigl( \rtlmax_{[1,t]}(\pi) + \rtlmax_{[t+1,n]}(\pi) \bigr) - \bigl( \rtlmax_{[1,t-|R|]}(\pi') + \rtlmax_{[t-|R|+1,n-|R|]}(\pi') \bigr)
\end{split}
\end{equation}
Rearranging terms leaves us with
\begin{equation*}
  \delta_{R}^{\rtlmax}(\pi)    = \bigl( \rtlmax_{[1,t]}(\pi) -  \rtlmax_{[1,t-|R|]}(\pi') \bigr) + \bigl( \rtlmax_{[t+1,n]}(\pi) -  \rtlmax_{[t-|R|+1,n-|R|]}(\pi') \bigr) 
\end{equation*}

By the original definition of $\rtlmax$, it is clear the $\rtlmax_{[a,n]}(\pi) = \rtlmax(\pi_{a} \pi_{a+1} \dotsm \pi_n)$ for any $a$.  Since $\pi'_{t-|R|+1} \pi'_{t-|R|+2} \dotsm \pi'_{n-|R|} \oi \pi_{t+1}\pi_{t+2} \dotsm \pi_n$, it follows that $\rtlmax_{[t+1,n]}(\pi) -  \rtlmax_{[t-|R|+1,n-|R|]}(\pi') = 0$.  Thus we see that
\begin{equation}\label{eqn:deltartlmax}
  \delta_{R}^{\rtlmax}(\pi)    = \rtlmax_{[1,t]}(\pi) -  \rtlmax_{[1,t-|R|]}(\pi')
\end{equation}

Let $w$ be a word in $[n]^k$ without repeated letters, and define 
\begin{equation}
 \rtlmax^{*}(w,n) := \bigl| \bigl\{w_i:  \{w_{i} +1, w_{i}+2, \dotsc, n\} \subseteq \{w_{1}, w_{2}, \dotsc, w_{i-1}\} \bigr\}  \bigr|.
\end{equation}
  By Lemma \ref{lem:rtlchar} above immediately see for any $\pi\in\S{n}$ 
\begin{equation*}
   \rtlmax_{[1,t]}(\pi) = \rtlmax^{*}(\pi_1 \pi_2 \dotsm \pi_t, n).
\end{equation*}
Hence equation \eqref{eqn:deltartlmax} becomes
\begin{equation}
  \delta_{R}^{\rtlmax}(\pi)    = \rtlmax^{*}(\pi_1 \pi_2 \dotsm \pi_t, n) -  \rtlmax^{*}(\pi'_1 \pi'_2 \dotsm \pi'_{t-|R|}, n-|R|).
\end{equation}

Therefore if $R\subseteq [t]$, then $\delta_{R}^{\rtlmax}(\pi)$ depends only the values of $\pi_1 \dotsm \pi_t$.  Thus by Definition \ref{def:deltaR} we see that $\rtlmax$ is ES-compatible with margin $0$, and the proof is complete.   As mentioned previously, the proof that $\rtlmin$ is ES-compatible with $0$ proceeds analogously, where $\rtlmin^{*}(w,n):=~ \bigl| \bigl\{w_i:~ \{1, 2, \dotsc, w_{i}-1\} \subseteq \{w_{1}, w_{2}, \dotsc, w_{i-1}\} \bigr\}  \bigr|$.

In \cite{Baxter2013} the author generalizes right-to-left maxima with the \emph{right-to-left maximal copy} of a consecutive pattern $\sigma$.  For a permutation $\pi$ and consecutive pattern $\sigma$, the subfactor $\pi_i \pi_{i+1} \dotsm \pi_{i+k-1}$ is a \emph{right-to-left maximal copy of $\sigma$} if the following criteria are satisfied:
 \begin{enumerate}
  \item  $\pi_i \pi_{i+1} \dotsm \pi_{i+k-1} \oi \sigma$, and 
  \item if $j>i$ and $\pi_j \pi_{j+1} \dotsm \pi_{j+k-1} \oi \sigma$ and $\sigma_m = \min(\sigma_1, \dotsc, \sigma_k)$, then $\pi_{j+m-1} < \pi_{i+m-1} $.  In other words, the minimal letter of $\pi_i \pi_{i+1} \dotsm \pi_{i+k-1}$ is greater than the minimal letter of any other copy of $\sigma$ to starting the right of $\pi_i$.
 \end{enumerate}
For example, the permutation $31856742$ has four copies of the consecutive pattern $21$ (namely, $31$, $85$, $74$, and $42$), but only three of these copies (all but $31$) are right-to-left maximal.  The classical right-to-left maxima can be viewed as right-to-left maximal copies of the pattern $1$.  A straightforward generalization of the above argument proves that the statistic counting the number of right-to-left maximal copies of a consecutive pattern $\sigma$ of length $t$ is ES-compatible with margin $t-1$.

Before closing this section, it should be noted that the number of left-to-right maxima and left-to-right minima are \emph{not} ES-compatible.  For example, if $n=4$, $w=12$, and $R=\{1,2\}$ then $\delta_{R}^{\ltrmin}(1234) = 0$ while $\delta_{R}^{\ltrmin}(1243)=-1$.  For $n=4$, $w=43$, and $R=\{1,2\}$ we also see $\delta_{R}^{\ltrmax}(4321) = 0$ while $\delta_{R}^{\ltrmax}(4312) = -1$.

\section{Deepening Schemes}\label{sec:Deepening} 


Suppose that pattern set $B$ admits a finite enumeration scheme.  The question remains whether one can find a finite enumeration scheme for $B$ with clearance sufficient to accomodate a given ES-compatible statistic with margin $m$.  The algorithms from \cite{Baxter2012} can be altered to ensure that any constructed scheme has clearance $c$ \emph{if} such a scheme exists. The existence of such a scheme is guaranteed in the following theorem.

\begin{theorem}\label{thm:Deepening}
 Suppose the pattern set $B$ has a finite enumeration scheme $E$.  Then for any $c \geq 0$ there is a finite enumeration scheme $E'$ with clearance $c$.
\end{theorem}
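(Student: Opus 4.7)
The plan is to construct $E'$ from $E$ by ``deepening'' each triple whose reversibly deletable set sits too close to the end of its prefix, using the fact that reversible deletability is inherited under extension of the prefix.

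The core technical step is an inheritance lemma: if $R \subseteq [|p|]$ is reversibly deletable for $p$ with respect to $B$, then for every permutation $q$ with $\red(q_1\dotsm q_{|p|}) = p$, the set $R$ (now viewed as a subset of $[|q|]$) is also reversibly deletable for $q$ with respect to $B$. To prove it, note that any $\pi \in \Spt{n}{B}{q}{w}$ also lies in $\Spt{n}{B}{p}{w_1\dotsm w_{|p|}}$, on which $d_R$ is bijective by hypothesis. Since $R \subseteq [|p|]$, the letters $w_{|p|+1},\dotsc,w_{|q|}$ appear (appropriately shifted) at the corresponding positions of $d_R(\pi)$, matching the corresponding positions of $d_R(w)$, so $d_R(\pi) \in \Spt{n-|R|}{B}{d_R(q)}{d_R(w)}$. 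The insertion inverting $d_R$ on the larger set likewise preserves those tail letters, furnishing the inverse on the smaller set.

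With the lemma in hand, I construct $E'$ iteratively. For each triple $(p, G, R) \in E$ violating clearance $c$ (i.e., $R \neq \emptyset$, $\vec{0} \notin G$, and $|p| - \max R < c$), replace it by $(p, G, \emptyset)$ and adjoin, for each child $p'$ of $p$, the triple $(p', G(p'), R)$; the lemma validates $R$, and the new triple has clearance $|p| - \max R + 1$. Repeat on any resulting triple that still fails clearance. The scheme closure condition additionally requires $d_R(p') \in E'$, but $d_R(p')$ is a child of $\hat{p} = d_R(p) \in E$, so apply the same deepening procedure to $\hat{p}$ using its recorded deletable set $\hat{R}$ (which by the lemma inherits to descendants of $\hat{p}$). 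This propagation traces the deletion chains $p \to d_R(p) \to d_{\hat{R}}(d_R(p)) \to \dotsb$ of $E$, each of which is finite because every step strictly shortens the prefix, ultimately reaching a triple of $E$ with empty deletable set whose children are already recorded.

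Every prefix introduced in $E'$ is therefore a descendant of some prefix of $E$ extended by at most $c$ letters, so $|p| \leq K + c$ for every $(p, G, R) \in E'$, where $K$ is the depth of $E$. Since only finitely many permutations have length at most $K+c$, $E'$ is finite. The conditions of Definition \ref{def:scheme} hold by construction: gap vectors are computed from $B$, deletable sets come from the inheritance lemma, and every child or deletion image referenced is explicitly included; clearance $c$ is built in. The main obstacle is establishing the inheritance lemma cleanly, as it requires verifying that both directions of the large bijection on $\Spt{n}{B}{p}{w_1\dotsm w_{|p|}}$ restrict correctly to a bijection on the smaller domain $\Spt{n}{B}{q}{w}$; once that is done, the rest is a bookkeeping loop over prefixes of bounded length.
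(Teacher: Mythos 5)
Your proof is correct and rests on exactly the same key lemma as the paper's (your inheritance lemma is the paper's Lemma~\ref{lem:rdDeepening}, proved the same way: restrict the known bijection on $\Spt{n}{B}{p}{w_1\dotsm w_{|p|}}$ and check that the tail letters beyond position $|p|$ are carried along unchanged by $d_R$ and its inverse). Where you diverge is in assembling $E'$. The paper takes a blunt route: it puts a triple into $E'$ for \emph{every} permutation of length at most $K+c$, assigns $R(p)=\emptyset$ to all prefixes of length below $K+c$, and only at the maximal length $K+c$ invokes the lemma to pull a reversibly deletable set down from the deepest ancestor of $p$ appearing in $E$; closure under children and under $d_R$ is then automatic because everything short enough is present, so no termination or propagation argument is needed. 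Your targeted iterative deepening, by contrast, only adds the children actually forced by a clearance violation, but in exchange you must track the propagation along deletion chains $p \to d_R(p) \to \dotsb$ and argue termination; you handle both correctly (chains strictly shorten, each triple needs at most $c$ deepening rounds since $|p|-\max R\geq 0$, so all prefixes stay within length $K+c$). The trade-off is that your construction yields a leaner scheme, closer in spirit to the practical variant the paper describes after the theorem (restricting the search to $R\subseteq[|p|-c]$, as in the scheme of Equation~\eqref{eqn:123c1}), while the paper's construction is easier to verify but deliberately wasteful. One small point worth making explicit in your write-up: when $\Spt{n}{B}{q}{w}=\emptyset$ the bijectivity claim is only required for nonempty sources (per the parenthetical in the definition of reversibly deletable), which is why the inheritance lemma does not need $q$ and $p$ to have comparable gap vector ideals.
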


To prove Theorem \ref{thm:Deepening}, we will first prove a lemma regarding reversibly deletable sets.

\begin{lemma}\label{lem:rdDeepening}
 Suppose that $R$ is a reversibly deletable set for prefix $p\in\S{k}$ with respect to $B$.  Then $R$ is also reversibly deletable for any permutation $p' \in \S{k'}$ such that $p'_1 \dotsm p'_k \oi p$, where $k'>k$.
\end{lemma}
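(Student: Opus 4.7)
The plan is to reduce the claim to the hypothesis that $R$ is reversibly deletable for $p$, by exploiting that $R \subseteq [k]$ only touches positions within the first $k$ letters of any permutation. Fix a word $w' \in [n]^{k'}$ with $\red(w') = p'$ and assume $\Spt{n}{B}{p'}{w'}$ is nonempty (otherwise the required map is vacuously a bijection). Since $p'_1 \dotsm p'_k \oi p$, any $\pi \in \Spt{n}{B}{p'}{w'}$ also lies in $\Spt{n}{B}{p}{w'_1 \dotsm w'_k}$, so this latter set is nonempty and $w'_1 \dotsm w'_k$ fails every gap vector criterion for $p$. The hypothesis then supplies a bijection $d_R \colon \Spt{n}{B}{p}{w'_1 \dotsm w'_k} \to \Spt{n-|R|}{B}{d_R(p)}{d_R(w'_1 \dotsm w'_k)}$, and I aim to show it restricts to the desired bijection on $\Spt{n}{B}{p'}{w'}$.

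For the forward direction, since $R \subseteq [k]$ the letters of $\pi$ at positions $k+1, \dotsc, k'$ survive the deletion and are each shifted down by $|\{r \in R : w'_r < \pi_i\}|$. This is precisely the same shift applied to $w'_{k+1}, \dotsc, w'_{k'}$ when forming $d_R(w')$, so $d_R(\pi)_1 \dotsm d_R(\pi)_{k'-|R|} = d_R(w')$ and $d_R(\pi)$ lies in $\Spt{n-|R|}{B}{d_R(p')}{d_R(w')}$. Conversely, given $\pi' \in \Spt{n-|R|}{B}{d_R(p')}{d_R(w')}$, its first $k-|R|$ letters equal $d_R(w'_1 \dotsm w'_k)$ (the subtraction in the definition of $d_R$ depends only on $R$ and the values $w'_r$ for $r \in R$), so $\pi' \in \Spt{n-|R|}{B}{d_R(p)}{d_R(w'_1 \dotsm w'_k)}$. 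The hypothesis yields a unique preimage $\pi \in \Spt{n}{B}{p}{w'_1 \dotsm w'_k}$, and the inverse of $d_R$ reinserts $w'_r$ at each position $r \in R$ while un-shifting the remaining values, forcing $\pi_1 \dotsm \pi_{k'} = w'$. Thus $\pi \in \Spt{n}{B}{p'}{w'}$, and injectivity transfers from the hypothesized bijection.

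The main obstacle is the bookkeeping of the value shifts induced by $d_R$; one must confirm that the subtraction applied to each letter depends only on $R$ and on $\{w'_r : r \in R\}$, so that it acts compatibly on both the prefix portion $w'_1 \dotsm w'_k$ and the tail $w'_{k+1} \dotsm w'_{k'}$. With that compatibility in hand, the proof is essentially just a restriction argument.
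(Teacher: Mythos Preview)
Your argument is correct and follows essentially the same route as the paper's proof: both exploit that $\Spt{n}{B}{p'}{w'} = \Spt{n}{B}{p}{w} \cap \Spt{n}{\emptyset}{p'}{w'}$ (with $w = w'_1\dotsm w'_k$) and that $d_R$ is bijective on each piece, then check that the images intersect to $\Spt{n-|R|}{B}{d_R(p')}{d_R(w')}$. The only cosmetic difference is that the paper invokes the known fact that $d_R$ is always a bijection $\Spt{n}{\emptyset}{p'}{w'}\to\Spt{n-|R|}{\emptyset}{d_R(p')}{d_R(w')}$ in the unrestricted case, whereas you unpack that fact by tracking the value shifts explicitly; this is exactly the ``bookkeeping'' you flag in your last paragraph, and it is indeed the whole content.
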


\begin{proof}
If $R\subseteq[k]$ is reversibly deletable for $p\in \S{k}$, then $d_R: \Spt{n}{B}{p}{w} \to \Spt{n-|R|}{B}{d_R(p)}{d_R(w)}$ is a bijection whenever $\Spt{n}{B}{p}{w} \neq \emptyset$.  Let $k'>k$ and let $p'\in\S{k'}$ such that $\red(p'_1 \dotsm p'_k) = p$.  Let $w'\in [n]^{k'}$ so that $\Spt{n}{B}{p'}{w'}\neq \emptyset$.  We must show that $d_R: \Spt{n}{B}{p'}{w'} \to \Spt{n-|R|}{B}{d_R(p')}{d_R(w')}$ is a bijection.

First note that $\Spt{n}{B}{p'}{w'} = \Spt{n}{B}{p}{w} \cap \Spt{n}{\emptyset}{p'}{w'}$, where $w=w'_1 \dotsm w'_k$.  Since $\Spt{n}{B}{p'}{w'}\neq \emptyset$ and $\Spt{n}{B}{p'}{w'} \subseteq \Spt{n}{B}{p}{w}$, we know that $\Spt{n}{B}{p}{w} \neq \emptyset$.  Since $R$ is reversibly deletable for $p$, the map $d_R: \Spt{n}{B}{p}{w} \to \Spt{n-|R|}{B}{d_R(p)}{d_R(w)}$ is bijective.  For the unrestricted case, $d_R$ is a bijection from $\Spt{n}{\emptyset}{p'}{w'}$ to $ \Spt{n-|R|}{\emptyset}{d_R(p')}{d_R(w')}$.  Therefore $d_R$ is a bijection from $\Spt{n}{B}{p'}{w'}$ to $ \Spt{n-|R|}{B}{d_R(p)}{d_R(w)} \cap \Spt{n-|R|}{\emptyset}{d_R(p')}{d_R(w')}$.  Now it remains to show $\Spt{n-|R|}{B}{d_R(p')}{d_R(w')} =   \Spt{n-|R|}{B}{d_R(p)}{d_R(w)} \cap \Spt{n-|R|}{\emptyset}{d_R(p')}{d_R(w')}$ to complete our proof.  By the action of $d_R$, the word formed by the first $k-|R|$ letters of $d_R(w')$ is exactly $d_R(w)$, and so $\Spt{n-|R|}{B}{d_R(p')}{d_R(w')} \subseteq \Spt{n-|R|}{B}{d_R(p)}{d_R(w)}$, and the remaining inclusions are obvious from the definitions.  Thus $R$ is reversibly deletable for $p'$.
\end{proof}

Note that the resulting set $R$ in Lemma \ref{lem:rdDeepening} is not necessarily a maximal reversibly-deletable set for $p$.  For example, recall from Equation \eqref{eqn:123scheme} that $\{2\}$ is reversibly deletable for the prefix $12$ with respect to $B=\{1\d2\d3\}$.  Lemma \ref{lem:rdDeepening} implies that $\{2\}$ is reversibly deletable for the prefix $231$, although the larger set $\{1, 2\}$ is also reversibly deletable for $231$.


We are now ready to prove Theorem \ref{thm:Deepening}.
\begin{proof}[Proof of Theorem \ref{thm:Deepening}]

Let $E$ be a finite enumeration scheme for $B$, with depth $K$.  We will construct a scheme $E'$ with clearance $c \geq 1$.  If $c=0$, then $E$ will suffice since any enumeration scheme has clearance $0$.

We will construct a (finite) set $E'$ by creating a triple $(p, G(p), R(p)) \in E'$ for each $p \in \bigcup_{k = 0}^{K+c} \S{k}$.  For $p=\epsilon$, let $G(\epsilon) = \emptyset$ and $R(\epsilon)=\emptyset$, so we see $E'$ satisfies criterion 1 in Definition \ref{def:scheme}.  For $p \neq \epsilon$, let $G(p)$ be a basis of gap vectors for $p$ with respect to $B$, which may be constructed according to the algorithm described in \cite{Baxter2012}.  

We now construct $R(p)$.  If $|p| < K+c$, then let $R(p)= \emptyset$.  If $|p|= K+c$, then let $p'$ be the longest prefix $p'=\red(p_1 \dotsm p_s)$ such that there is a triple $(p', G', R')$ in the original scheme $E$.   Then $|p'|\leq K$, and since no child of $p'$ has a triple in $E$ (by maximality of $p'$) we know $R'$ is nonempty. By Lemma \ref{lem:rdDeepening}, $R'$ is also a reversibly deletable set for $p$, so we let $R(p) = R'$.  Furthermore, $R(p) = R' \subseteq [K]$, so $|p| - \max R(p) \geq c$ and so we see that $E'$ has clearance $c$.

We now verify that $E'$ satisfies the criteria to be an enumeration scheme for $B$, as outlined in Definition \ref{def:scheme}.  Each triple $(p, G(p), R(p))\in E'$ is constructed so that $G(p)$ is a basis of gap vectors for $p$ with respect to $B$ and so that $R(p)$ is a reversibly deletable set for $p$ with respect to $B$.  As previously mentioned, $E'$ satisfies criterion 1 since $(\epsilon, \emptyset, \emptyset) \in E'$.  If $R(p) = \emptyset$, then $|p|<K+c$ and so $E'$ contains a triple $(p', G(p'), R(p'))$ for each child $p'$ since $E'$ contains a triple for \emph{every} permutation of length $|p|+1$.  Therefore $E'$ satisfies criterion 2a.  If $R(p)\neq\emptyset$, then $|p|=K+c$ and $E'$ contains a triple $(\hat{p}, G(\hat{p}), R(\hat{p}))$ for $\hat{p} = d_{R(p)}(p)$ since $E'$ contains a triple for \emph{every} permutation with length less than $K+c$.  Therefore $E'$ satisfies criterion 2b.
\end{proof}

It perhaps goes without saying that $E'$ from the proof of Theorem \ref{thm:Deepening} is not usually minimal, neither in terms of number of triples nor the encoded recurrence.  For example, the proof constructs the following scheme $E'$ with clearance $1$ for $B=\{1\d2\d3\}$ based on the scheme $E$ in \eqref{eqn:123scheme}:
\begin{equation}
 \begin{split}
E' = \bigl\{ &
(\epsilon, \emptyset, \emptyset), 
(1, \emptyset, \emptyset), 
(12, \{\langle 0,0,1\rangle \}, \emptyset), 
(21, \emptyset, \emptyset), \\
&
(123, \{ \langle 0,0,0,0 \rangle \}, \{ 2\}),
(132, \{ \langle 0,0,1,0 \rangle, \langle 0,0,0,1 \rangle \}, \{2\} ), 
(231, \{ \langle 0,0,0,1 \rangle \}, \{ 2\}), \\
&
(213, \{\langle 0,0,0,1 \rangle \}, \{ 1\}), 
(312, \{ \langle 0,0,1,0 \rangle, \langle 0,0,0,1 \rangle \}, \{1\} ), 
(321, \emptyset, \{1\})
\bigr\}.
 \end{split}
\end{equation}
In practice, therefore, it is better to alter the automated-discovery algorithms from \cite{Baxter2012} to construct \emph{ab initio} a reversibly deletable set for each prefix $p$ so that only sets $R \subseteq [|p| - c]$ are considered.  This change guarantees the clearance criterion holds, and Theorem \ref{thm:Deepening} guarantees that the algorithm will succeed in finding a finite scheme whenever the algorithms would succeed without the clearance conditions.  Such an approach yields the following scheme with clearance $1$ for $B=\{1\d2\d3\}$.
\begin{equation}
 \begin{split}
E' = \bigl\{ &
(\epsilon, \emptyset, \emptyset), 
(1, \emptyset, \emptyset), 
(12, \{\langle 0,0,1\rangle \}, \emptyset), 
(21, \emptyset, \{1\}), \\ 
& (123, \{ \langle 0,0,0,0 \rangle \}, \{1, 2\}),
(132, \{ \langle 0,0,1,0 \rangle, \langle 0,0,0,1 \rangle \}, \{2\} ), 
(231, \{ \langle 0,0,0,1 \rangle \}, \{1, 2\})
\bigr\}.
 \end{split}
\label{eqn:123c1}
\end{equation}
Note in particular that the children of $21$ do not need to appear in a scheme with clearance $1$, since $21$ has a nonempty reversibly deletable set which respects the clearance requirement.

We close this section with a corollary combining Theorem \ref{thm:Deepening} and Corollary \ref{cor:SchemeComputing}. 

\begin{corollary} \label{cor:SchemeComputing2}
  If $f$ is a ES-compatible permutation statistic of margin $m$ and $E$ is an enumeration scheme for pattern set $B$, then $F(\Sav{n}{B}, f, q)$ may be computed in polynomial time.
\end{corollary}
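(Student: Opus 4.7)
The plan is to simply chain together Theorem \ref{thm:Deepening} and Corollary \ref{cor:SchemeComputing}. Starting from the given finite enumeration scheme $E$ for $B$, I would first invoke Theorem \ref{thm:Deepening} with clearance parameter $c = m$ to obtain a finite enumeration scheme $E'$ for $B$ of clearance at least $m$. I would then feed $E'$ into Corollary \ref{cor:SchemeComputing}: its hypotheses are now met, since $f$ is ES-compatible with margin $m$ and $E'$ has clearance at least $m$. The conclusion of that corollary is exactly the desired statement that $F(\Sav{n}{B}, f, q)$ may be computed in polynomial time.

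The one subtle point worth flagging, though not really an obstacle, is that the polynomial-time guarantee must survive the deepening procedure. This is immediate from the construction in the proof of Theorem \ref{thm:Deepening}: $E'$ has triples indexed by permutations of length at most $K + m$, where $K$ is the depth of the original scheme $E$ and $m$ is a fixed constant independent of $n$. Hence $E'$ is still finite, so the recurrences it encodes — now annotated with the factors $q^{\Delta^{f}_{R}(w,n)}$ produced by Theorem \ref{thm:HowToRecurse} — evaluate using only polynomially many arithmetic operations in $n$. All substantive work has already been carried out in the earlier theorems, so this corollary is just a bookkeeping statement recording their combined consequence.
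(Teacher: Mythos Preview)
Your proposal is correct and matches the paper's approach exactly: the paper simply states that this corollary combines Theorem~\ref{thm:Deepening} and Corollary~\ref{cor:SchemeComputing}, without giving further details. Your additional remark about why the polynomial-time guarantee survives deepening is a nice clarification that the paper leaves implicit.
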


\section{Applications}\label{sec:Applications}


We wish to take some time in this section to highlight some of the less obvious applications of the results above.  Studying statistics over sets $\Sav{n}{B}$ is relatively new, so much of what follows only scratches the surface.  In what follows we use the common notation that $(\sigma)(\pi)$ denotes the number of copies of vincular pattern $\sigma$ in permutation $\pi$.

\subsection{Implementation}\label{sec:implementation}

The above algorithms have been implemented in a Maple package \texttt{Statter}.  This package supercedes the package \texttt{gVatter} accompanying \cite{Baxter2012} and can perform the following tasks.
 \begin{enumerate}
   \item Build an enumeration scheme for a given pattern set $B$ with given clearance $c$.  (This also requires search parameters for the maximum size of gap vectors and maximum depth of prefixes.)
   \item Read a given enumeration scheme to get the distribution over $\Sav{n}{B}$ for given $n$ of a statistic based on the number of copies of a consecutive pattern, a pattern of type $\sigma_{1}\dotsc\sigma_{t-1}\d\sigma_{t}$, right-to-left maxima, or right-to-left minima. 
  \item Read a given enumeration scheme to get the distribution of multistatistics given above.
\end{enumerate}

For example, \texttt{Statter} gave the the enumeration scheme for pattern set $B=\{1\d2\d3\}$ with clearance 1 shown above in \eqref{eqn:123c1}.  From that, \texttt{Statter} can quickly give the distribution of the descent statistic over, say, $\Sav{10}{1\d2\d3}$:
$$
F(\Sav{10}{1\d2\d3},\des,q) = 42\,{q}^{4}+1770\,{q}^{5}+7515\,{q}^{6}+6455\,{q}^{7}+1013\,{q}^{8}+{q
}^{9}.
$$

A fuller investigation of the descent statistic over $\Sav{n}{1\d2\d3}$, particularly its connection to Dyck paths, is given in \cite{Barnabei2010}.

The package \texttt{Statter} is available for download from the author's homepage.

\subsection{Peaks and Valleys}\label{sec:peaks}

A \emph{peak} of a permutation $\pi$ is a letter $\pi_i$ such that $\pi_{i-1} < \pi_i > \pi_{i+1}$.  Let $\peak(\pi)$ be the number of peaks of $\pi$.  Therefore $\peak(\pi)$ is the total number of copies of the consecutive patterns $132$ and $231$ in $\pi$.   Likewise a \emph{valley} is a letter $\pi_i$ such that $\pi_{i-1} > \pi_i < \pi_{i+1}$.  If $\vall(\pi)$ is the number of valleys of $\pi$, then again we see that $\vall(\pi)$ is the total number of copies of the consecutive patterns $213$ and $312$ in $\pi$.  Therefore we see that $\peak(\pi)$ and $\vall(\pi)$ are ES-compatible statistics.

While exploring the distributions of the $\peak$ and $\vall$ statistics over sets $\Sav{n}{B}$, a few interesting confluences appeared: 

\begin{theorem}\label{thm:peaks}
 The following three distributions are equivalent:
   \begin{itemize}
    \item the distribution of peaks over $1\d2\d3$-avoiding permutations,
    \item the distribution of valleys over $1\d2\d3$-avoiding permutations, and
    \item the distribution of valleys over $1\d3\d2$-avoiding permutations.
  \end{itemize}
In the notation above, $F(\Sav{n}{1\d2\d3}, \peak, q) = F(\Sav{n}{1\d2\d3}, \vall, q) = F(\Sav{n}{1\d3\d2}, \vall, q)$.
\end{theorem}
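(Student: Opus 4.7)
The plan is to handle the two equalities separately, because the pattern $1\d2\d3$ admits a reverse-complement symmetry but is not related to $1\d3\d2$ by any of the standard involutions from the introduction.

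For the first equality, I would use the reverse-complement involution $\phi:\pi\mapsto\pi^{rc}$, where $(\pi^{rc})_i := n+1-\pi_{n+1-i}$. A short calculation shows $(1\d2\d3)^{rc} = 1\d2\d3$: the reverse is $3\d2\d1$, whose complement is again $1\d2\d3$. So $\phi$ is an involution on $\Sav{n}{1\d2\d3}$. Rewriting the peak condition $\pi_{i-1} < \pi_i > \pi_{i+1}$ in terms of $\pi^{rc}$ at position $j = n+1-i$ reverses each inequality, yielding $(\pi^{rc})_{j-1} > (\pi^{rc})_j < (\pi^{rc})_{j+1}$, which is exactly the valley condition. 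Thus $\phi$ bijects peak-positions with valley-positions, giving $F(\Sav{n}{1\d2\d3},\peak,q) = F(\Sav{n}{1\d2\d3},\vall,q)$.

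For the second equality my preferred approach is to exhibit a valley-preserving bijection $\psi:\Sav{n}{1\d2\d3} \to \Sav{n}{1\d3\d2}$. The natural candidate is a composition of two classical Catalan bijections, one from each class into Dyck paths of semilength $n$ (e.g.\ Krattenthaler's bijection on the $1\d3\d2$ side and a Knuth/RSK-style bijection on the $1\d2\d3$ side). The point would be to choose both bijections so that the valleys of the permutation correspond to the same Dyck path feature (say, the number of Dyck peaks), reducing both distributions to the same Narayana polynomial.

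The main obstacle is verifying the statistic-preserving property of these intermediate bijections; this requires a careful induction tracking how the valley count interacts with the recursive decomposition of each class (splitting at the position of $n$). As a fallback, the paper's own machinery suffices: by Theorem \ref{thm:ConsAreDF}, $\vall$ is ES-compatible with margin $2$ (it counts copies of the consecutive patterns $213$ and $312$), and Theorem \ref{thm:Deepening} lets us deepen the finite depth-$2$ schemes for $\{1\d2\d3\}$ and $\{1\d3\d2\}$ to clearance $2$; the resulting recurrences for the two distribution polynomials could then be shown equal by induction on $n$. Either route, combined with the first equality, yields the three-way identity.
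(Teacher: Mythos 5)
Your first equality is exactly the paper's argument: reverse-complement is an involution on $\Sav{n}{1\d2\d3}$ and exchanges peaks with valleys, so $F(\Sav{n}{1\d2\d3},\peak,q)=F(\Sav{n}{1\d2\d3},\vall,q)$. No issues there.

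The second equality is where the gap lies: you offer two routes and carry out neither. The paper's proof hinges on a concrete structural claim that your proposal never identifies: for any $\pi$ avoiding either $1\d2\d3$ or $1\d3\d2$, the letter $\pi_i$ is a valley if and only if $i>1$, $\pi_i$ is a left-to-right minimum, and $\pi_{i+1}$ is not a left-to-right minimum. (If some earlier $\pi_j$ satisfied $\pi_j<\pi_i$, then $\pi_j\pi_{i-1}\pi_i$ and $\pi_j\pi_i\pi_{i+1}$ would witness copies of $1\d3\d2$ and $1\d2\d3$ respectively, so in either avoidance class a valley must be a left-to-right minimum; the converse direction is immediate.) With this in hand, the Simion--Schmidt bijection $\Sav{n}{1\d3\d2}\to\Sav{n}{1\d2\d3}$, which preserves left-to-right minima by construction, automatically preserves valleys, and the equality follows with no Dyck paths needed. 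Your first route (composing two Catalan bijections through Dyck paths) would still require this same characterization, or an equivalent verification on the $1\d2\d3$ side, to discharge what you yourself flag as the ``main obstacle.'' Your fallback route is also not automatic: the schemes for $\{1\d2\d3\}$ and $\{1\d3\d2\}$ have different gap vectors and reversibly deletable sets, so the two recurrences are not identical on their face, and the induction you invoke would itself need a nontrivial matching of the refined counts $F(\Spt{n}{B}{p}{w},\vall,q)$ across the two classes. As written, the second equality is asserted but not proven.
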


\begin{remark}    In \cite{Dokos2012}, Dokos \etal started investigating statistic-Wilf-equivalence, where two pattern sets $B$ and $B'$ are $f$-Wilf-equivalent for permutation statistic $f$ if $F(\Sav{n}{B},f,q)=F(\Sav{n}{B'},f,q)$ for all $n$.  This equivalence is denoted $B\statwe{f} B'$.   In this notation, the second half of Theorem \ref{thm:peaks} can be written as
\begin{equation*}
 1\d2\d3 \statwe{\vall} 1\d3\d2.
\end{equation*}
\end{remark}

\begin{proof}
  The equivlence $F(\Sav{n}{1\d2\d3}, \peak, q) = F(\Sav{n}{1\d2\d3}, \vall, q)$ follows directly from symmetry.  Permutation $\pi$ avoids $1\d2\d3$ if and only if its reverse-complement $\pi^{rc}$ avoids $1\d2\d3$.  Next, the peaks in $\pi$ correspond to valleys in $\pi^{rc}$ and vice versa.  Thus it follows that $F(\Sav{n}{1\d2\d3}, \peak, q) = F(\Sav{n}{1\d2\d3},\vall, q)$.

 We next prove $F(\Sav{n}{1\d2\d3}, \vall, q) = F(\Sav{n}{1\d3\d2}, \vall, q)$.  We will begin by proving the following claim:  \textit{For any $\pi\in\Sav{n}{1\d2\d3}\cup\Sav{n}{1\d3\d2}$, $\pi_i$ is a valley of $\pi$ if and only if $i>1$, $\pi_i$ is a \ltr minimum of $\pi$ and $\pi_{i+1}$ is not an \ltr minimum of $\pi$.}
Suppose that $\pi_i$ is a valley but that there is some $\pi_j$ such that $\pi_j < \pi_i$ and $j< i$.  Then we see $\pi_j \pi_{i-1} \pi_i \oi 132$ and $\pi_j \pi_i \pi_{i+1} \oi 123$ and so $\pi$ would contain $1\d3\d2$ and $1\d2\d3$.  Thus if $\pi$ avoids $1\d3\d2$ or $1\d2\d3$ then $\pi_i$ must be a \ltr minimum.  Since $\pi_{i+1}>\pi_i$ we see that $\pi_{i+1}$ is not a \ltr minimum.  Conversely, suppose that $\pi_i$ is a \ltr minimum of $\pi$ for $i>1$ but $\pi_{i+1}$ is not.  Then $\pi_{i+1}>\pi_i$, since otherwise $\pi_{i+1}$ would be a \ltr minimum, and also $\pi_{i-1}>\pi_i$, since otherwise $\pi_i$ would not be an \ltr minimum.  Therefore $\pi_i$ must be a valley of $\pi$.  Hence the claim is proven.

The Simion-Schmidt bijection in \cite{Simion1985} provides a map $\Sav{n}{1\d3\d2} \to \Sav{n}{1\d2\d3}$ which preserves the  \ltr minima.  Therefore by the claim above, the same bijection also preserves the valleys of the permutations.  Thus we see that 
$F(\Sav{n}{1\d2\d3}, \vall, q) = F(\Sav{n}{1\d3\d2}, \vall, q)$.
\end{proof}

\begin{remark}
The proof above implies that $\vall$ is equally distributed over $\Sav{n}{1\d2\d3}$ and $\Sav{n}{1\d3\d2}$ even when restricting further to those permutations with a given set of \ltr minima.  Letting $\mathrm{LRmax}(\pi)$ be the set of indices which are \ltr minima for $\pi$, then for any $S\subseteq [n]$.
\begin{equation}
\sum_{\begin{subarray}{l}\pi\in\Sav{n}{1\d2\d3}\\ \mathrm{LRmax}(\pi)=S \end{subarray}} q^{\vall(\pi)} = 
\sum_{\begin{subarray}{l}\pi\in\Sav{n}{1\d3\d2}\\ \mathrm{LRmax}(\pi)=S \end{subarray}} q^{\vall(\pi)}
\end{equation}
\end{remark}

For completeness we will comment that $F(\Sav{n}{1\d3\d2},\peak,q)$ appears in OEIS as A091894, suggesting the following correspondence.  A Dyck path of semilength $n$ is a lattice path from $(0,0)$ to $(n,0)$ composed of steps $U=(1,1)$ and $D=(1,-1)$ which never goes below the $x$-axis.  We will write Dyck paths as words $U^{a_1}\,D^{b_1}\,U^{a_2}\,D^{b_2} \dotsm U^{a_k}\,D^{b_k}$.  

\begin{theorem}\label{thm:peaks2}
  The number of permutations in $\Sav{n}{1\d3\d2}$ with $k$ peaks equals the number of Dyck paths of semilength $n$ with $k$ occurrences of the subfactor $DDU$.
\end{theorem}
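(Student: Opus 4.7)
The plan is to exhibit a bijection from $\Sav{n}{1\d3\d2}$ to the set of Dyck paths of semilength $n$ that carries the statistic $\peak$ to the number of $DDU$ subfactors. I would use the classical first-maximum decomposition: for a nonempty $\pi\in\Sav{n}{1\d3\d2}$, write $\pi = \alpha\, n\, \beta$ where $\pi_k = n$. Avoidance of $1\d3\d2$ forces every letter of $\alpha$ to exceed every letter of $\beta$, so $\alpha$ (after reduction) and $\beta$ are themselves $1\d3\d2$-avoiders of lengths $k-1$ and $n-k$. Define $\phi(\emptyset) := \emptyset$ and $\phi(\pi) := U\, \phi(\alpha)\, D\, \phi(\beta)$; a routine induction shows $\phi$ is a well-defined length-preserving bijection onto Dyck paths of semilength $n$.

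Next I would derive matching recurrences. On the permutation side, the only letter whose peak status depends on the decomposition is the letter $n$: it is a peak of $\pi$ iff it has both a left and a right neighbor, \ie iff $\alpha\neq\emptyset$ and $\beta\neq\emptyset$ (both neighbors are automatically less than $n$). Peaks internal to $\alpha$ or $\beta$ survive reduction, so
\begin{equation*}
  \peak(\pi) = \peak(\alpha) + \peak(\beta) + \chi[\alpha\neq\emptyset]\cdot\chi[\beta\neq\emptyset].
\end{equation*}
On the Dyck-path side, the corresponding statement requires a careful boundary analysis. The key auxiliary observation, which I would prove by induction alongside the bijection, is that for any nonempty $\gamma$ the path $\phi(\gamma)$ begins with $U$ and ends with $D$. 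Using this, one checks that every $DDU$ subfactor of $\phi(\pi) = U\phi(\alpha)D\phi(\beta)$ either lies entirely inside $\phi(\alpha)$, lies entirely inside $\phi(\beta)$, or straddles the central $D$; the straddling factor can only be the last letter of $\phi(\alpha)$ together with the central $D$ and the first letter of $\phi(\beta)$, and it occurs iff both $\alpha$ and $\beta$ are nonempty. Consequently the number of $DDU$ factors in $\phi(\pi)$ obeys exactly the same recurrence as $\peak(\pi)$.

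The main obstacle is this boundary bookkeeping: verifying that no other candidate $DDU$ factor overlaps the central $D$, and in particular that the leading $U$ of $\phi(\beta)$ (when nonempty) prevents a second straddling contribution, while the leading $U$ of $\phi(\pi)$ itself never participates in any $DDU$. Once this check is complete, induction on $n$ with the trivial base cases $n\le 1$ yields $\peak(\pi) = \bigl|\{i : \phi(\pi)_i\phi(\pi)_{i+1}\phi(\pi)_{i+2} = DDU\}\bigr|$ for every $\pi\in \Sav{n}{1\d3\d2}$, which is precisely the claim.
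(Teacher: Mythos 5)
Your proof is correct, and it takes a genuinely different route from the paper's. The paper invokes Krattenthaler's bijection $\Phi$, which encodes $\pi\in\Sav{n}{1\d3\d2}$ by the positions and values of its \ltr minima as $U^{a_1}D^{b_1}\dotsm U^{a_k}D^{b_k}$ (with $a_i=m_{i-1}-m_i$ and $b_i=j_{i+1}-j_i$), and then argues that $DDU$ subfactors correspond to successive \ltr minima occupying non-adjacent positions, tying this back to the characterization established in the claim inside the proof of Theorem \ref{thm:peaks}. You instead use the first-return decomposition $\pi=\alpha\,n\,\beta$ matched against $\phi(\pi)=U\phi(\alpha)D\phi(\beta)$ and verify that both statistics satisfy the same recurrence $s(\pi)=s(\alpha)+s(\beta)+\chi[\alpha\neq\emptyset]\cdot\chi[\beta\neq\emptyset]$. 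Your boundary analysis is sound: the only candidate straddling factor is the triple consisting of the last letter of $\phi(\alpha)$, the central $D$, and the first letter of $\phi(\beta)$, and it is $DDU$ exactly when $\alpha$ and $\beta$ are both nonempty, since nonempty images begin with $U$ and end with $D$; the leading $U$ of $\phi(\pi)$ can only ever be the third letter of a window and has nothing before it, so it never contributes. Each approach buys something: the paper's version exposes the refinement by the set of \ltr minima and reuses the Dyck-path encoding already in play for Theorem \ref{thm:peaks}, while yours is self-contained, needs no prior lemma, and produces an explicit statistic-preserving bijection with cleaner bookkeeping. In fact your argument sidesteps a wrinkle in the paper's write-up: the paper's closing sentences identify $DDU$ factors with \emph{valleys} via the earlier claim, even though the theorem (and the data in Table \ref{tab:peaks2}, which at $n=3$ gives $4+q$ rather than the valley distribution $3+2q$) concerns \emph{peaks}; your direct peak recurrence proves the intended statement unambiguously.
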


\begin{proof}
  Krattenthaler provides a bijection, $\Phi$, in \cite{Krattenthaler2001} from $\Sav{n}{1\d3\d2}$ to the set of Dyck paths of semilength $n$.  In that bijection, a permutation with $k$ valleys maps to a Dyck path with $k$ subfactors $DDU$.  The remainder of this proof outlines this bijection.

Suppose that $\pi\in\Sav{n}{1\d3\d2}$ with $\ltrmin(\pi)=k$.  Then let $j_1, j_2, \dotsc, j_k$ be the indices of the \ltr minima and let $m_i = \pi_{j_i}$ be the values of the \ltr minima for $1\leq i \leq k$.  Further, let $m_0 = n+1$ and $j_{k+1}=n+1$.  Then $\Phi(\pi)$ is the Dyck path $U^{a_1}\,D^{b_1}\,U^{a_2}\,D^{b_2} \dotsm U^{a_k}\,D^{b_k}$ where $a_i = m_{i-1} - m_i$ and $b_i = j_{i+1}-j_i$.  For $\Phi^{-1}$, observe that the \ltr minima of $\Phi^{-1}( U^{a_1}\,D^{b_1}\,U^{a_2}\,D^{b_2} \dotsm U^{a_k}\,D^{b_k})$ occur at indices $1, 1+b_1, 1+b_1+b_2, \dotsc, 1+b_1+b_2 + \dotsm b_{k-1}$ and have values $n+1 - m_1, n+1 - (m_1+m_2), \dotsc, n+1-(m_1 + m_2 + \dotsm m_k)$.  It is well-known that a $1\d3\d2$-avoiding permutation is uniquely determined by the indices and values of its \ltr minima, and thus $\Phi$ is bijective.

Observe that if $\ltrmin(\pi)=k$, then $\Phi(\pi) = U^{a_1}\,D^{b_1}\,U^{a_2}\,D^{b_2} \dotsm U^{a_k}\,D^{b_k}$ for $a_i>0$ and $b_i>0$.  Therefore each \ltr minima beyond $\pi_1$ corresponds to a $DU$ subfactor in $\Phi(\pi)$ .  Furthermore if adjecent letters $\pi_i$ and $\pi_{i+1}$ are both \ltr minima, then the corresponding string of $D$'s in the image is only a single $D$.  Therefore $DDU$ subfactors correspond to non-adjacent \ltr minima, which by the claim from Theorem \ref{thm:peaks} correspond to valleys in the permutation. 
\end{proof}

The distributions from Theorems \ref{thm:peaks} and \ref{thm:peaks2} are given in Tables \ref{tab:peaks1} and \ref{tab:peaks2}.  This data was generated by \texttt{Statter}.

\begin{table}[h]
 \begin{tabular}{c|l}
$n$ & $F(\Sav{n}{1\d2\d3}, \peak, q)$ \\
\hline
1 & $1$ \\
2 & $2$\\
3 & $3+2\,q$\\
4 & $4+10\,q$\\
5 & $5+32\,q+5\,{q}^{2}$\\
6 & $6+84\,q+42\,{q}^{2}$\\
7 & $7+198\,q+210\,{q}^{2}+14\,{q}^{3}$\\
8 & $8+438\,q+816\,{q}^{2}+168\,{q}^{3}$\\
9 & $9+932\,q+2727\,{q}^{2}+1152\,{q}^{3}+42\,{q}^{4}$\\
10 & $10+1936\,q+8250\,{q}^{2}+5940\,{q}^{3}+660\,{q}^{4} $\\
\end{tabular}
 \caption{The distributions from Theorem \ref{thm:peaks} for $1\leq n \leq 10$.  }  
 \label{tab:peaks1}
\end{table}

\begin{table}[h]
 \begin{tabular}{c|l}
$n$ & $F(\Sav{n}{1\d3\d2}, \peak, q)$ \\
\hline
1 & $1$ \\
2 & $2$\\
3 & $4+q$\\
4 & $8+6\,q$\\
5 & $16+24\,q+2\,{q}^{2}$\\
6 &  $32+80\,q+20\,{q}^{2}$\\
7 &  $64+240\,q+120\,{q}^{2}+5\,{q}^{3}$\\
8 & $128+672\,q+560\,{q}^{2}+70\,{q}^{3}$\\
9 &  $256+1792\,q+2240\,{q}^{2}+560\,{q}^{3}+14\,{q}^{4}$\\
10 &  $512+4608\,q+8064\,{q}^{2}+3360\,{q}^{3}+252\,{q}^{4}$\\
\end{tabular}
 \caption{The distributions of peaks over 1-3-2-avoiding permutations.  Corresponds to OEIS sequence A091894 \cite{OEIS}.}
 \label{tab:peaks2}
\end{table}

A complete classification of the classical patterns according to peak-Wilf-equivalence is forthcoming in \cite{Baxter2014peak}.

\subsection{The major index}\label{sec:majind}

The major index is defined by $\maj(\pi) := \sum_{i: \pi_{i}>\pi_{i+1}} i$.  As shown in \cite{Babson2000}, the major index can be decomposed into the sum of four vincular pattern functions:
\begin{equation}
 \maj(\pi) = (3\d21)(\pi) + (2\d31)(\pi) + (1\d32)(\pi) + (21)(\pi).
\end{equation}
where $(\sigma)(\pi)$ is the number of copies of $\sigma$ in permutaion $\pi$.
While these patterns do not conform to the structure of those in Theorem \ref{thm:tail}, their reverses do.  Thus we define
\begin{equation*}
 \rmaj(\pi) := \maj(\pi^{r}) = (12\d3)(\pi) + (13\d2)(\pi) + (23\d1)(\pi) + (12)(\pi).
\end{equation*}

Therefore Corollary \ref{cor:SchemeComputingMulti}, Theorem \ref{thm:tail} imply the following special case:

\begin{corollary}
If $B$ is a set of patterns such that $B^{r}:=\{\tau^{r}: \tau\in B\}$ admits a finite enumeration scheme, then $F( \Sav{n}{B},\maj,q)$ can be computed via enumeration scheme.
\end{corollary}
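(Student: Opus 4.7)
The plan is to reduce the computation of $F(\Sav{n}{B},\maj,q)$ to one of $F(\Sav{n}{B^r},\rmaj,q)$, since by hypothesis $B^r$ admits a finite enumeration scheme while $B$ might not. The bridge between these two quantities is the reverse involution on $\S{n}$: the map $\pi \mapsto \pi^r$ is a bijection $\Sav{n}{B}\to\Sav{n}{B^r}$, and by the very definition of $\rmaj$ we have $\maj(\pi)=\rmaj(\pi^r)$. Hence
\begin{equation*}
 F(\Sav{n}{B},\maj,q)=\sum_{\pi\in\Sav{n}{B}} q^{\maj(\pi)}=\sum_{\sigma\in\Sav{n}{B^r}} q^{\rmaj(\sigma)}=F(\Sav{n}{B^r},\rmaj,q),
\end{equation*}
so it suffices to compute the right-hand side via a scheme for $B^r$.

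Next I would unpack the decomposition $\rmaj(\pi)=(12\d3)(\pi)+(13\d2)(\pi)+(23\d1)(\pi)+(12)(\pi)$ and verify that each summand is an ES-compatible pattern function. The first three summands each have the form $(\sigma,[t-2])$ for $t=3$, and so by Theorem \ref{thm:tail} each is ES-compatible with margin $1$. The fourth summand $(12)$ is a consecutive pattern of length $t=2$, hence ES-compatible with margin $t-1=1$ by Theorem \ref{thm:ConsAreDF}. Consequently the tuple $\mathbf{f}=\langle (12\d3),(13\d2),(23\d1),(12)\rangle$ consists of ES-compatible statistics of maximum margin $1$.

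Finally, I would invoke Theorem \ref{thm:Deepening} to deepen the given finite enumeration scheme for $B^r$ into a finite scheme $E'$ with clearance at least $1$. Then Corollary \ref{cor:SchemeComputingMulti} applied to $E'$ with $\mathbf{q}=\langle q,q,q,q\rangle$ produces $F(\Sav{n}{B^r},\mathbf{f},\mathbf{q})$ in polynomial time, and specializing all four $q_i$ to $q$ collapses the exponents to $\rmaj$, yielding $F(\Sav{n}{B^r},\rmaj,q)=F(\Sav{n}{B},\maj,q)$. The only real content in the argument is the reverse trick together with the routine bookkeeping step of checking that $(3\d21)^r=12\d3$, $(2\d31)^r=13\d2$, $(1\d32)^r=23\d1$, and $(21)^r=12$ under the rule $(\sigma,X)^r=(\sigma^r,k-X)$; everything else is assembly of previously established machinery, so I do not anticipate a substantive obstacle.
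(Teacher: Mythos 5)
Your proposal is correct and follows essentially the same route as the paper: reverse to pass from $B$ to $B^{r}$ and from $\maj$ to $\rmaj$, decompose $\rmaj$ into the four ES-compatible pattern functions covered by Theorems \ref{thm:ConsAreDF} and \ref{thm:tail}, deepen the scheme via Theorem \ref{thm:Deepening}, and apply Corollary \ref{cor:SchemeComputingMulti} with all indeterminates set to $q$. The paper leaves these steps implicit, so your write-up is simply a more explicit version of the intended argument.
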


Distributions of the major index over avoidance sets $\Sav{n}{\tau}$ for classical patterns $\tau\in\S{3}$ are studied by Dokos \etal in \cite{Dokos2012}.  

To illustrate the method with a new example, consider the set of classical patterns 
$$B_k={\{2\d1\d3, 1\d2\d\dotsm\d(k-1)\d k\}}$$ for any $k\geq 2$.  A discussion of $B_3^c = \{2\d3\d1, 3\d2\d1\}$ appears in \cite{Dokos2012}, but we will consider the more general family.  It was shown in \cite{Vatter2008} that $B_k^r = \{ 3\d1\d2, k\d (k-1) \d \dotsm \d 2\d 1\}$ admits a finite enumeration scheme of depth 2:
\begin{equation}\label{eqn:Bkscheme}
\big\{ (\epsilon, \emptyset, \emptyset), (1, \emptyset, \emptyset), (12, \emptyset, \{1\}), (21, \{ \langle 0,1,0 \rangle, \langle k-2,0,0\rangle \}, \{1\}) \big\}
\end{equation}

The scheme in \eqref{eqn:Bkscheme} derives from general-purpose algorithms of \cite{Vatter2008}, and so is not necessarily optimal for a special case.  There are a few missed gap vector criteria which will simplify the resulting recurrence.
Observe that if $\pi$ is a permutation with $\pi_1\geq k$, then $2,3,\dotsc, k-1$ lie among the remaining letters.  Either these letters appear in decreasing order, in which case $\pi$ contains a $k\d (k-1) \d \dotsm \d 2\d 1$, or at least two of the letters appear in increasing order, in which case $\pi$ contains $3\d1\d2$.  Therefore $\langle k-1,0 \rangle$ is a gap vector for prefix pattern $1$, and in turn this implies that $\{\langle i,j,0 \rangle: i+j = k-1\}$ are gap vectors for prefix $12$.  Thus we arrive at the following enumeration scheme for $B_k^r$:

\begin{equation}\label{eqn:Bkscheme2}
\begin{split}
\big\{  (\epsilon, \emptyset, \emptyset), &(1, \{\langle k-1,0\rangle \}, \emptyset), (12, \{\langle k-1,0,0 \rangle, \langle k-2,1,0 \rangle, \dotsc, \langle 0,k-1,0 \rangle  \}, \{1\}),\\ & (21, \{ \langle 0,1,0 \rangle, \langle k-2,0,0\rangle \}, \{1\}) \big\} \\
\end{split}
\end{equation}

We will consider the following analogue of the classic Euler-Mahonian distribution, restricted to the $B_k$-avoiding permutations:
\begin{equation*}
 \begin{split}
G_n(p;w) &:=F( \Spt{n}{B_k}{p}{w}, \langle \maj, \des \rangle, \langle q, t \rangle) \\
                &= F( \Spt{n}{B_k^r}{p}{w}, \langle \rmaj, (12) \rangle, \langle q, t \rangle)
\end{split}
\end{equation*}

This scheme in \eqref{eqn:Bkscheme2} translates into the following recurrences for $n\geq 2$:

\begin{equation}
\begin{split}
  \sum_{\pi\in  \Sav{n}{B_k}} q^{\maj(\pi)}\,t^{\des(\pi)} &=   \sum_{\pi\in  \Sav{n}{B_k^r}} q^{\rmaj(\pi)}\,t^{(12)(\pi)}\\
    &=G_n(\epsilon; \epsilon) 
     =  \sum_{a=1}^{n} G_n(1, a) \\
      &=  \sum_{a=1}^{k-1} G_n(1, a) \textrm{  (by gap vector criteria)} \\
G_n(1,a) &= \sum_{b=1}^{a-1} G_n(21;ab) + \sum_{b=a+1}^{n} G_n(12;ab) \\
      &=  G_n(21;a(a-1)) + \sum_{b=a+1}^{k} G_n(12;ab) \textrm{  (by gap vector criteria)} \\
G_n(21;a(a-1) ) &= t^0\,q^0\,G_{n-1}(1;a-1) \\
G_n(12;ab ) &= t^1\,q^{n-1}\,G_{n-1}(1;b-1) \\
\end{split}
\end{equation}

The reader is left to verify the following values:
\begin{itemize}
  \item $\Delta_{\{1\}}^{(12)}(a(a-1),n) = 0$
  \item $\Delta_{\{1\}}^{\rmaj}(a(a-1),n) = 0$
  \item $\Delta_{\{1\}}^{(12)}(ab,n) = 1$  for $ab\oi 12$
  \item $\Delta_{\{1\}}^{\rmaj}(ab,n) = n-1$  for $ab\oi 12$
\end{itemize}

Combining the recurrence relations above yields the following recurrence for $G_n(1;a)$, the distribution of $(\maj, \des)$ over $B_k$-avoiding permutations:

\begin{equation}
G_n(1;a) = \begin{cases}
 t\,q^{n-1}\,\sum\limits_{b=1}^{k-1} G_{n-1}(1;b), & a=1 \\
 G_{n-1}(1;a-1) + t\,q^{n-1}\,\sum\limits_{b=a}^{k-1} G_{n-1}(1;b), & 2 \leq a \leq k-1 \\
 0, & a \geq k.
\end{cases}
\end{equation}

\section{Conclusion and Future Work}

The techniques above face the same limitations as enumeration schemes.  In short, the recurrences produced are often complicated and do not translate nicely into generating functions.  The methods discussed in Chapter 5 of \cite{BaxterThesis} make some progress toward converting schemes to generating functions, but cannot account for the full range of recurrences that schemes can produce.

Further, not all sets of vincular patterns $B$ admit a finite enumeration scheme, and there is no full characterization predicting whether a given $B$ will admit a finite scheme.  Data on how many sets $B$ do admit a small scheme are available in \cite{Baxter2012}.

We note that it should be possible to adapt the insertion encodings from \cite{Albert2005insertion, Vatter2009insenc} toward the purpose of computing $F(\Sav{n}{B}, f,q)$ for permutation statistics $f$ based on counting copies of consecutive patterns.  The insertion encoding offers two advantages over enumeration schemes: (1) the recurrences developed lead directly to generating functions, and (2) there are more [sets of] classical patterns which admit regular insertion encodings than finite enumeration schemes.  The current state of insertion encodings, however, cannot handle vincular patterns, however.  Such tools could a very helpful in classification of patterns under statistic-Wilf-equivalence for various statistics.

\bibliographystyle{plain}
\bibliography{EScompatibleStats-arxiv.bbl}

\end{document}